\theoremstyle{plain}
\newtheorem{thm}{Theorem}[section]
\newtheorem{lemma}[thm]{Lemma}
\newtheorem{lem}[thm]{Lemma}
\newtheorem{cor}[thm]{Corollary}
\newtheorem{prop}[thm]{Proposition}
\theoremstyle{definition}
\renewcommand{\tilde}{\widetilde}
\newcommand{\sO}{{\mathcal O}}
\newcommand{\N}{{\mathbb N}}
\newcommand{\Z}{{\mathbb Z}}
\newcommand{\Gal}{{\rm Gal}}
\newcommand{\ilim}{\mathop{\varprojlim}\limits} 
\begin{document}

\title[On Hesselholt's conjecture]{On Cohomology of Witt vectors of algebraic integers and a conjecture of Hesselholt} 

\author[A. Hogadi]{Amit Hogadi}

\address{School of Mathematics, Tata Institute of Fundamental
Research, Homi Bhabha Road, Bombay 400005, India}

\email{amit@math.tifr.res.in}

\author[S. Pisolkar]{Supriya Pisolkar}

\address{School of Mathematics, Tata Institute of Fundamental
Research, Homi Bhabha Road, Bombay 400005, India}

\email{supriya@math.tifr.res.in}

\date{}

\begin{abstract}

Let $K$ be a complete discrete valued field of characteristic zero with residue field $k_K$ of characteristic $p > 0$. Let $L/K$ be a finite Galois extension with the Galois group $G$ and suppose that the induced extension of residue fields $k_L/k_K$ is separable. In \cite{lh}, Hesselholt conjectured that $H^1(G,W(\sO_L))$ is zero, where $\sO_L$ is the ring of integers of $L$ and $W(\sO_L)$ is the Witt ring of $\sO_L$ w.r.t. the prime $p$. He partially proved this conjecture for a large class of extensions. In this paper, we prove Hesselholt's conjecture for all Galois extensions.  
\end{abstract}

\maketitle

\section{Introduction} \label{intro}

\noindent Let $p$ be a prime number and $K$ be a complete discrete valued field of characteristic zero with residue field $k_K$ of characteristic $p$. $L/K$ be a finite Galois extension of complete discrete valued fields as above with Galois group $G$. Suppose that $k_L/k_K$ is separable. In \cite{lh}, Hesselholt conjectured that the  proabelian group $\ilim H^1(G,W_n(\sO_L))$ vanishes, where $W_n(\sO_L)$ is the ring of Witt vectors of length $n$ in $\sO_L$. As explained in \cite{lh}, this can be viewed as an analogue of Hilbert theorem $90$ for the Witt ring $W(\sO_L)$.

In order to prove this conjecture one easily reduces to the case where $L/K$ is a totally ramified Galois extension of degree $p$ (see Lemma \ref{rd}). For such extension, let $s=s({\rm L/K})$ be the ramification break (see \cite{fesenko}, III, (1.4)) in the ramification filtration of ${\rm Gal}({\rm L/K})$. Hesselholt proved the following theorem, which proves his conjecture for a large class of extensions.

\begin{thm}[\cite{lh}]\label{hessel}
Let $L/K$ be a totally ramified cyclic extension of order $p$ with $s>e_K/(p-1)$. Then $\ilim H^1(G,W_n(\sO_L))$ is zero.
\end{thm}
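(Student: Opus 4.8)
The plan is to control the pro-system $\{H^1(G,W_n(\sO_L))\}_n$ through the standard short exact sequences of truncated Witt vectors. For each $n$ the Verschiebung and restriction give a $G$-equivariant short exact sequence
\[
0 \to \sO_L \xrightarrow{V^{n-1}} W_n(\sO_L) \xrightarrow{R} W_{n-1}(\sO_L) \to 0,
\]
in which $R$ is exactly the transition map of the pro-system. Since $|G|=p$ annihilates $H^i(G,-)$ for $i\ge 1$ and $G$ is cyclic, every group in sight is a finite $\F_p$-vector space and the associated Tate cohomology is $2$-periodic. Writing the long exact sequence of the displayed sequence, the kernel of $R_*\colon H^1(G,W_n(\sO_L))\to H^1(G,W_{n-1}(\sO_L))$ is the image of $H^1(G,\sO_L)$ under $(V^{n-1})_*$, and its cokernel embeds into $H^2(G,\sO_L)$. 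Thus the whole problem is governed by the single finite group $\hat H^{*}(G,\sO_L)$ together with the connecting maps.

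Next I would make $\hat H^{*}(G,\sO_L)$ explicit. Fixing a generator $\sigma$ of $G$ and a uniformiser $\pi_L$ with $v_L(\sigma\pi_L-\pi_L)=s+1$, the normal basis theorem identifies $L$ with $K[G]$, so $\sO_L$ is a $\Z_p[G]$-lattice of trivial Herbrand quotient and $\hat H^0(G,\sO_L)=\sO_K/N_{L/K}\sO_L$, $\hat H^{-1}(G,\sO_L)=\ker N/(\sigma-1)\sO_L$ are finite of equal order. I would compute these through the $\pi_L$-adic filtration, using that $\sigma-1$ and $N_{L/K}$ shift valuations by amounts dictated by $s$ and by the different exponent $(p-1)(s+1)$. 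The hypothesis $s>e_K/(p-1)$, equivalently $p(p-1)s>v_L(p)=pe_K$, is precisely the deeply ramified regime in which these shifts are large compared with $v_L(p)$; this forces the fractional ideals $\mathfrak p_L^m$ outside a short window to be induced, hence cohomologically trivial, so that $\hat H^{*}(G,\sO_L)$ is concentrated at one filtration step and is computed explicitly.

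Finally I would deduce pro-vanishing. It is not the individual groups that vanish but the pro-system: for each fixed $m$ the iterated restriction $R_*^{k}\colon H^1(G,W_{m+k}(\sO_L))\to H^1(G,W_m(\sO_L))$ should be zero for $k\gg 0$. By finiteness the images $\mathrm{im}\big(H^1(G,W_n)\to H^1(G,W_m)\big)$ stabilise, so it suffices to show their common stable value is $0$; a class lies there iff it lifts through the tower to all levels, and the obstruction to lifting from level $n-1$ to level $n$ is the connecting map $\partial_n\colon H^1(G,W_{n-1}(\sO_L))\to H^2(G,\sO_L)$ of the displayed sequence. I would therefore show that, under $s>e_K/(p-1)$, these connecting maps are injective on the relevant stable classes, so that no nonzero class lifts indefinitely. \emph{This is the main obstacle}: because the Verschiebung filtration of $W_n(\sO_L)$ has non-split, Frobenius-twisted graded pieces, the explicit computation of $\hat H^{*}(G,\sO_L)$ from the previous step does not globalise to the tower automatically. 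One must combine it with the relation $FV=p$ (which vanishes on the $p$-torsion groups $H^1$) and the functoriality of $F,V,R$ with respect to the displayed sequences in order to evaluate the $\partial_n$ and confirm that the deeply ramified hypothesis makes them injective in the required range.
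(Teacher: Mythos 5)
Your proposal stalls exactly where you flag it: the evaluation of the connecting maps $\partial_n\colon H^1(G,W_{n-1}(\sO_L))\to H^2(G,\sO_L)$ and their claimed injectivity on stable classes is not a technical loose end but the entire content of the theorem. Everything preceding it --- the sequences $0\to \sO_L \xrightarrow{V^{n-1}} W_n(\sO_L)\to W_{n-1}(\sO_L)\to 0$, periodicity of Tate cohomology for cyclic $G$, the Herbrand-quotient computation of $\hat H^{*}(G,\sO_L)$ via the $\pi_L$-adic filtration --- is standard bookkeeping that nowhere uses $s>e_K/(p-1)$, and knowledge of $\hat H^{*}(G,\sO_L)$ alone cannot determine $\partial_n$: the extension of $G$-modules displayed above is encoded by the Witt addition carries (the polynomials $f_\ell$, $h_{\ell-2}$ of Lemma \ref{imp2}), a genuinely nonlinear datum. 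Both Hesselholt's proof and this paper's stronger unconditional argument handle that nonlinearity through trace identities, above all $v_K(tr(a^p)-tr(a)^p)=e_K+v_L(a)$ (Lemma \ref{vksub}) played against $v_K(tr(a))\geq (v_L(a)+s(p-1))/p$ (Lemma \ref{vktr}); it is precisely in weighing $e_K$ against $s(p-1)$ in such estimates that the hypothesis $s>e_K/(p-1)$ (or, in this paper, no hypothesis at all) does its work. Your middle step is also misattributed: cohomological triviality of the ideals $\p_L^m$ is a congruence condition on $m$ modulo $p$ relative to the break $s$, valid or not independently of the inequality $s>e_K/(p-1)$, so the ``deeply ramified regime'' does not concentrate $\hat H^{*}(G,\sO_L)$ in the way you assert.

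Two further structural problems would remain even if $\partial_n$ were computed. First, the residue fields here need not be finite, so $H^1(G,W_n(\sO_L))$ need not be a finite group; your stabilisation of images and the implicit compactness argument (choosing lifts compatible through the whole tower) require a descending chain condition you have not supplied. Second, injectivity of $\partial_n$ on the full stable part is delicate, since $\ker\partial_n$ is the image of $H^1(G,W_n(\sO_L))$, which is typically nonzero; you would need injectivity exactly on the intersection of all such images, and no mechanism for this is proposed. For contrast, the paper bypasses the long exact sequences entirely: using the explicit recursion of Lemma \ref{C} it shows that for suitable $M$ every $\underline{x}\in W_M(\sO_L)^{tr=0}$ satisfies $v_L(x_1)\geq s$, so that by Lemma \ref{imp} the image of the class of $\underline{x}$ in $H^1(G,\sO_L)$ vanishes, and then concludes $\ilim H^1(G,W_n(\sO_L))=0$ by Lemma \ref{imp1} --- a route that proves Theorem \ref{hessel} without the restriction $s>e_K/(p-1)$.
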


\noindent The proabelian group $\ilim H^1(G,W_n(\sO_L))$ can be identified with $H^1(G,W(\sO_L))$ (see Corollary  \ref{limit}). The main result of this paper is, 

\begin{thm} \label{main} Let $L/K$ be a finite Galois extension of complete discrete valued fields with Galois group $G$. Then $ H^1(G,W(\sO_L))$ is zero.
\end{thm}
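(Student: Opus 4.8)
The plan is to combine the reductions already available with a devissage up the Witt tower. By Lemma \ref{rd} it suffices to treat a totally ramified cyclic extension $L/K$ of degree $p$, so that $G=\langle\sigma\rangle\cong\Z/p$; Theorem \ref{hessel} settles the range $s>e_K/(p-1)$, and I would look for an argument uniform in $s$ that in particular covers the remaining range $s\le e_K/(p-1)$. Since $G$ is cyclic I would work throughout with the explicit description $H^1(G,M)=\ker(N_M)/(\sigma-1)M$, where $N_M=1+\sigma+\dots+\sigma^{p-1}$; on $M=\mathcal{O}_L$ the operator $N$ is the trace $\mathrm{Tr}_{L/K}$, while on $M=W_n(\mathcal{O}_L)$ it is the sum of the $p$ Galois translates in the Witt addition. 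Note at the outset that $|G|=p$ annihilates every $H^1(G,W_n(\mathcal{O}_L))$, so all the groups in sight have exponent dividing $p$.

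The devissage rests on the Verschiebung short exact sequences of $G$-modules
$$0\to\mathcal{O}_L\xrightarrow{V^{n-1}}W_n(\mathcal{O}_L)\xrightarrow{R}W_{n-1}(\mathcal{O}_L)\to 0,$$
where $R$ is truncation; these are $G$-equivariant because $G$ acts on Witt vectors coordinatewise. The resulting long exact sequences
$$H^0(G,W_{n-1}(\mathcal{O}_L))\xrightarrow{\delta_n}H^1(G,\mathcal{O}_L)\xrightarrow{(V^{n-1})_*}H^1(G,W_n(\mathcal{O}_L))\xrightarrow{R_*}H^1(G,W_{n-1}(\mathcal{O}_L))$$
identify $\ker(R_*)$ with the image of $(V^{n-1})_*$, itself a quotient of $H^1(G,\mathcal{O}_L)$. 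The first concrete task is therefore to pin down $H^1(G,\mathcal{O}_L)=\ker(\mathrm{Tr}_{L/K})/(\sigma-1)\mathcal{O}_L$ in terms of the ramification: from $v_L(\sigma\pi_L-\pi_L)=s+1$, the different $v_L(\mathfrak{d}_{L/K})=(p-1)(s+1)$, and the standard estimates for $\mathrm{Tr}_{L/K}(\mathfrak{m}_L^{j})$, one reads off this group as a $p$-torsion $\mathcal{O}_K$-module whose structure is governed by $s$; separability of $k_L/k_K$ is what supplies the Eisenstein and normal-basis generators that make these estimates clean.

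The \emph{crux} is to control the transition maps $R_*$ and prove that the inverse system $\{H^1(G,W_n(\mathcal{O}_L))\}_n$ is pro-zero, which by Corollary \ref{limit} gives $H^1(G,W(\mathcal{O}_L))=\varprojlim_n H^1(G,W_n(\mathcal{O}_L))=0$. Here I would bring in the Witt-vector Frobenius $F$ together with the relation $FV=p$. Since $\ker(R_*)$ consists exactly of the classes pulled up from $H^1(G,\mathcal{O}_L)$ by iterated Verschiebung, and $F_*V_*$ is multiplication by $p$, which is zero on these groups of exponent $p$, the Frobenius annihilates everything in the kernel of a transition map; chasing the commuting operators $F$, $V$, $R$ up the tower, a class that survives sufficiently many restrictions is forced to be divisible by $p$, hence to vanish. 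Concretely, I would aim to show that the connecting maps $\delta_n$ eventually surject onto $H^1(G,\mathcal{O}_L)$, so that $(V^{n-1})_*=0$ for large $n$ and the iterated restriction $R_*^{\,k}$ becomes the zero map.

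I expect the \emph{main obstacle} to be exactly this last step. Verschiebung and Witt multiplication are not $\mathcal{O}_L$-linear and mix the Witt coordinates through Frobenius twists, so the vanishing of the limit cannot be extracted from the graded pieces alone; it requires an honest estimate, coordinate by coordinate, showing that the ramification obstruction in $H^1(G,\mathcal{O}_L)$ is consumed by the connecting homomorphisms after boundedly many steps. This is where the explicit value of $s$, the trace estimates, and in particular the case $s\le e_K/(p-1)$ left untouched by Theorem \ref{hessel} must all be used, and where the bulk of the technical work will lie.
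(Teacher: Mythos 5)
Your reductions are the same as the paper's (Lemma \ref{rd} to the totally ramified cyclic degree-$p$ case, Corollary \ref{limit} to pass to $\varprojlim H^1(G,W_n(\sO_L))$), but the core of your argument has a genuine gap, and in one place it points in the wrong direction. The $F$--$V$--$R$ devissage gives you only formal information: from $FV^{k}=pV^{k-1}$ and exponent $p$ you conclude that $F_*$ kills $\ker(R_*)=\mathrm{im}\,(V^{n-1})_*$, but since \emph{every} group in sight has exponent $p$, ``forced to be divisible by $p$, hence zero'' is an empty mechanism — $F_*$ is not injective, and $p$-divisibility arguments can never distinguish the true situation from one where the system fails to be pro-zero. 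Worse, your stated goal — show $\delta_n$ eventually surjects so that $(V^{n-1})_*=0$ — would make $R_*\colon H^1(G,W_n(\sO_L))\to H^1(G,W_{n-1}(\sO_L))$ \emph{injective} for large $n$, which is the opposite of what pro-vanishing requires: you need composites of transition maps to be eventually \emph{zero} (this is exactly what Hesselholt's Lemma \ref{imp1} reduces to, namely the vanishing of the single map $H^1(G,W_m(\sO_L))\to H^1(G,\sO_L)$ for some $m$, which is Theorem \ref{reduction}). Injective transitions on nonzero groups of exponent $p$ would in fact obstruct the conclusion rather than deliver it.

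You candidly flag that the ``honest estimate, coordinate by coordinate'' is missing, and that is precisely where the entire content of the paper lies; nothing in your outline supplies it or reduces its difficulty. The paper's mechanism is not homological at all: Lemma \ref{imp2} analyzes the carry polynomials in the Witt-vector sum of the $p$ Galois conjugates, showing each carry $f_\ell$ has all monomials of degree $\geq p$ and the deeper term $h_{\ell-2}$ of degree $\geq p^2$; feeding the trace-zero condition into this (Lemma \ref{C}) and invoking Hesselholt's valuation lemmas (\ref{vktr}, \ref{vksub}) yields a two-stage bootstrap in the proof of Theorem \ref{reduction}: first $v_L(x_\ell)\geq s(p-1)/p$ for all $\ell\leq n-1$, then a backward induction from the top coordinate giving $v_L(x_1)\geq \frac{s(p-1)}{p}\bigl(1+\frac{1}{p}+\cdots+\frac{1}{p^{n-2}}\bigr)$, a geometric series converging to $s$; discreteness of $v_L$ then gives $v_L(x_1)\geq s$ for some finite length $M$, and Lemma \ref{imp} kills the class in $H^1(G,\sO_L)$. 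Note also that the paper's argument is uniform in $s$ and never invokes Theorem \ref{hessel}, whereas your plan treats the range $s\leq e_K/(p-1)$ as residual; as it stands, your proposal is a framing of the problem, not a proof.
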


Although the proof of Theorem \ref{hessel} does not generalize (for instance due to use of (\cite{lh}, $2.2$)), our proof, which is based on an observation on addition in Witt rings (see Lemma \ref{imp2}) relies on several ideas developed in \cite{lh}. One of these ideas which we use is the following.

\begin{lemma} {\rm(\cite{lh}, 1.1)}\label{imp1} Let $L/K$ be as in Theorem \ref{main}. Let $m \geq 1$ be an integer and suppose that the induced map 
$$ H^1(G,W_{m+n}(\sO_L)) \to  H^1(G,W_{n}(\sO_L))$$ 
\noindent is zero for $n=1$. Then the same is true, for all $n \geq 1$. In particular $$\ilim H^1(G,W_n(\sO_L))=0$$ 
\end{lemma}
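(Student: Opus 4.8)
The plan is to prove the statement by induction on $n$, writing $\phi_n \colon H^1(G,W_{m+n}(\sO_L)) \to H^1(G,W_n(\sO_L))$ for the map induced by the canonical restriction (truncation) $R^m \colon W_{m+n}(\sO_L) \to W_n(\sO_L)$. The base case $n=1$ is precisely the hypothesis $\phi_1 = 0$, so it suffices to deduce $\phi_{n+1}=0$ from $\phi_n=0$. The whole argument rests on the short exact sequences of $G$-modules
\[
0 \to W_a(\sO_L) \xrightarrow{V^b} W_{a+b}(\sO_L) \xrightarrow{R^a} W_b(\sO_L) \to 0,
\]
where $V$ is the Verschiebung and $R$ the restriction; both are additive and $G$-equivariant, so these are genuine sequences of $G$-modules and yield long exact sequences in group cohomology.

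For the inductive step I would fix a class $\xi \in H^1(G,W_{m+n+1}(\sO_L))$ and first show that it lifts along Verschiebung. Taking $a=m+1,\ b=n$ above gives
\[
0 \to W_{m+1}(\sO_L) \xrightarrow{V^{n}} W_{m+n+1}(\sO_L) \xrightarrow{R^{m+1}} W_{n}(\sO_L) \to 0,
\]
so $\xi$ lies in the image of $V^n_*$ as soon as $R^{m+1}_*\xi$ vanishes. Now factor $R^{m+1} = R^m \circ R$ through $W_{m+n}(\sO_L)$: on $H^1$ this reads $R^{m+1}_* = \phi_n \circ R_*$, which is zero by the induction hypothesis. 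Hence $R^{m+1}_*\xi = 0$, and there is a class $\tilde\xi \in H^1(G,W_{m+1}(\sO_L))$ with $\xi = V^n_*(\tilde\xi)$.

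The second ingredient is the compatibility of Verschiebung with the truncations defining $\phi_1$ and $\phi_{n+1}$: the square
\[
\begin{CD}
W_{m+1}(\sO_L) @>{V^{n}}>> W_{m+n+1}(\sO_L)\\
@V{R^m}VV @VV{R^m}V\\
W_1(\sO_L) @>{V^{n}}>> W_{n+1}(\sO_L)
\end{CD}
\]
commutes, since both composites send a Witt vector to the vector having its $0$-th component placed in slot $n$ and all other components zero. Consequently $\phi_{n+1}\circ V^n_* = V^n_* \circ \phi_1$ on cohomology, and combining the two steps gives
\[
\phi_{n+1}(\xi) = \phi_{n+1}\bigl(V^n_*\tilde\xi\bigr) = V^n_*\bigl(\phi_1(\tilde\xi)\bigr) = 0,
\]
because $\phi_1=0$. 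As $\xi$ was arbitrary this shows $\phi_{n+1}=0$, completing the induction. The one point that needs care — and where I expect the real content to sit — is selecting the Verschiebung/restriction splitting so that the indices in the lifting of $\xi$ and in the intertwining square match up; once the short exact sequences are arranged this way, the remainder is formal homological algebra.

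Finally, the vanishing of the proabelian group is immediate: an element of $\ilim H^1(G,W_n(\sO_L))$ is a compatible system $(\xi_n)_n$ with $R_*\xi_{n+1}=\xi_n$, whence $\xi_n = R^m_*\xi_{m+n} = \phi_n(\xi_{m+n}) = 0$ for every $n$. Therefore $\ilim H^1(G,W_n(\sO_L)) = 0$.
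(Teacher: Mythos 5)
Your proof is correct, and it is essentially the standard argument — the one in Hesselholt's original Lemma 1.1, which this paper cites without reproducing a proof: induction on $n$ via the exact sequence of $G$-modules $0 \to W_{m+1}(\sO_L) \xrightarrow{V^{n}} W_{m+n+1}(\sO_L) \xrightarrow{R^{m+1}} W_{n}(\sO_L) \to 0$, the factorization $R^{m+1}=R^m\circ R$ to invoke the inductive hypothesis, and the commutation $R^m\circ V^n = V^n\circ R^m$ to conclude via $\phi_1=0$. One cosmetic remark: the clause ``$\ilim H^1(G,W_n(\sO_L))=0$'' is meant as vanishing of the \emph{pro-abelian group} (transition maps are eventually zero), which is exactly your statement $\phi_n=0$ for all $n$ and in particular yields the vanishing of the inverse limit that you verify at the end.
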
 
\noindent Thus, because of Lemmas \ref{rd}, \ref{imp1} and Corollary \ref{limit}, to prove Theorem \ref{main}, it is enough to prove the following.

\begin{thm} \label{reduction} Let $K$ be as above and ${\rm L/K}$ be degree-$p$ totally ramified cyclic extension with Galois group $G$. Then there exists a positive integer $m \in \N$ such that the homomorphism $H^1(G, W_m(\sO_L))  \to  H^1(G,\sO_L) $ is equal to zero.
\end{thm}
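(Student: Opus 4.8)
The plan is to exploit that $G=\langle\sigma\rangle$ is cyclic of order $p$, so that ordinary cohomology can be replaced by Tate cohomology: for every $G$-module $M$ one has $H^1(G,M)\cong\widehat H^{-1}(G,M)=\ker(N_G)/(\sigma-1)M$, where $N_G=1+\sigma+\cdots+\sigma^{p-1}$. The truncation map of rings $R\colon W_m(\sO_L)\to W_1(\sO_L)=\sO_L$, $(x_0,\dots,x_{m-1})\mapsto x_0$, is $G$-equivariant, so under this identification the map in the statement becomes $[x]\mapsto[x_0]$. Since $R(N_Gx)=\mathrm{Tr}_{L/K}(x_0)$, the condition $N_G(x)=0$ already forces $\mathrm{Tr}_{L/K}(x_0)=0$, so $[x_0]$ does define a class in $H^1(G,\sO_L)$. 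Thus the theorem is equivalent to: for a suitable $m$, every $x\in W_m(\sO_L)$ with $N_G(x)=0$ has first component $x_0\in(\sigma-1)\sO_L$.

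First I would convert $N_G(x)=0$ into additive (trace) conditions by passing to ghost components. As $\sO_L$ has characteristic $0$ it is $p$-torsion free, so the ghost map is injective and, since $w_j$ is functorial in the ring, $N_G(x)=0$ is equivalent to $\mathrm{Tr}_{L/K}(w_j(x))=0$ for all $0\le j\le m-1$, where $w_j(x)=\sum_{i=0}^{j}p^i x_i^{p^{j-i}}$. This is exactly where the behaviour of addition in the Witt ring (Lemma \ref{imp2}) is needed, to keep the non-componentwise addition under control. Isolating the leading term $x_0^{p^j}$ in $w_j$ gives $\mathrm{Tr}_{L/K}(x_0)=0$ together with $\mathrm{Tr}_{L/K}(x_0^{p^j})\equiv0\pmod p$ for $1\le j\le m-1$; these are the constraints that the higher Witt components impose on $x_0$, and their number grows with $m$.

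Next I would use that $H^1(G,\sO_L)$ is annihilated by $p=|G|$. This gives $p\cdot\ker(\mathrm{Tr}_{L/K})\subseteq(\sigma-1)\sO_L$, and since $\sO_L$ is $p$-torsion free it yields the reduction: if $x_0\in(\sigma-1)\sO_L+p\sO_L$ and $\mathrm{Tr}_{L/K}(x_0)=0$, then in fact $x_0\in(\sigma-1)\sO_L$. Hence it suffices to prove the \emph{mod $p$} statement that the image $\bar x_0$ of $x_0$ in $\sO_L/p\sO_L$ lies in $(\sigma-1)(\sO_L/p\sO_L)$. Here $\sO_L/p\sO_L=\sO_L/\pi_L^{pe_K}$ is of finite length, and the action of $\sigma-1$ on its $\pi_L$-adic associated graded is governed by ramification theory: on the degree-$n$ piece $\sigma-1$ is an isomorphism onto degree $n+s$ whenever $p\nmid n$ (with $s$ the ramification break) and lands strictly deeper when $p\mid n$, so the cokernel of $\sigma-1$ is concentrated in the degrees congruent to $s$ modulo $p$.

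The heart of the argument, and the step I expect to be the main obstacle, is to show that the Frobenius-twisted trace conditions $\mathrm{Tr}_{L/K}(\bar x_0^{p^j})=0$ for $0\le j\le m-1$ force $\bar x_0$ into $(\sigma-1)(\sO_L/p\sO_L)$ once $m$ is large enough. The difficulty is precisely that these conditions are \emph{nonlinear} in $\bar x_0$ (they involve its Frobenius powers), whereas membership in $(\sigma-1)(\sO_L/p\sO_L)$ is a linear condition; the bridge between the two cannot be purely formal and must again go through the structure of Witt addition (Lemma \ref{imp2}), using that its correction terms carry a factor of $p$ and hence have $\pi_L$-valuation at least $pe_K$. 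The plan is to run an induction that, at each stage, uses the vanishing of one more twisted trace to clear the obstruction of $\bar x_0$ in the next bad graded degree ($p\mid n$), the $p$-divisibility of the correction terms being played off against the failure of surjectivity of $\sigma-1$ exactly on those degrees; balancing these two effects is what pins down the required $m$ in terms of $e_K$ and $s$. Once this mod-$p$ surjectivity is secured for that $m$, the reduction of the previous paragraph completes the proof.
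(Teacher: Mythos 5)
Your setup (Tate cohomology for cyclic $G$, the ghost-map translation of $N_G(x)=0$ into $tr(w_j(x))=0$ for all $j$, and the observation that $p\cdot\ker(tr)\subseteq(\sigma-1)\sO_L$) is all correct, but the reduction you build on it has a fatal flaw: the conditions you extract from the higher ghost components, namely $tr(x_0^{p^j})\equiv 0 \pmod{p}$ for $1\leq j\leq m-1$, are \emph{vacuous}. In any commutative ring one has $\left(\sum_i a_i\right)^p\equiv\sum_i a_i^p \pmod{p}$, so with $a_i=\sigma^{i-1}x_0$ this gives $tr(x_0^p)\equiv tr(x_0)^p \pmod{p\sO_K}$, and by iteration $tr(x_0^{p^j})\equiv tr(x_0)^{p^j}\pmod{p}$. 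Hence all of your mod-$p$ constraints already follow from $tr(x_0)=0$ alone; their number does not effectively grow with $m$, contrary to what your plan requires. Worse, if your ``heart'' step could be carried out --- if those conditions forced $\bar{x}_0\in(\sigma-1)(\sO_L/p\sO_L)$ for some $m$ --- then by your own reduction \emph{every} $x_0\in\ker(tr)$ would lie in $(\sigma-1)\sO_L$, i.e.\ $H^1(G,\sO_L)=0$. That is false for the extensions at hand: a degree-$p$ totally ramified extension in residue characteristic $p$ is wildly ramified, so $\widehat{H}^0(G,\sO_L)=\sO_K/tr(\sO_L)\neq 0$, and since $\sO_L$ spans $L\cong K[G]$ the Herbrand quotient is $1$, whence $H^1(G,\sO_L)\neq 0$. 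So no choice of $m$ can make your mod-$p$ statement true; the reduction modulo $p$ discards exactly the information that makes the theorem non-trivial.

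What the paper does instead is keep the \emph{exact} valuation-theoretic coupling between the Witt components rather than its mod-$p$ shadow. From Lemma \ref{imp2} it derives (Lemma \ref{C}) the recursion $-tr(x_{\ell+1})=\frac{tr(x_\ell^p)-tr(x_\ell)^p}{p}-C\cdot tr(x_\ell)^p+h_{\ell-1}$, and the decisive input is Hesselholt's equality (Lemma \ref{vksub}) $v_K(tr(a^p)-tr(a)^p)=e_K+v_L(a)$ --- an exact identity, not merely the congruence mod $p$ that trivialized your constraints. Feeding the trace bound of Lemma \ref{vktr} into this recursion shows first that $v_L(x_\ell)\geq s(p-1)/p$ for all $\ell\leq n-1$, and then a downward bootstrap gives $v_L(x_{n-i})\geq \frac{s(p-1)}{p}\left(1+\frac{1}{p}+\cdots+\frac{1}{p^{i-1}}\right)$, a geometric series tending to $s$. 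For $M$ large this forces $v_L(x_1)>s-1$, hence $v_L(x_1)\geq s$ by discreteness, and Lemma \ref{imp} (a nonzero class in $H^1(G,\sO_L)$ has a representative of valuation at most $s-1$) concludes. Note also a secondary misreading in your sketch: the correction terms in Lemma \ref{imp2} do not carry a factor of $p$ (so they do not have $\pi_L$-valuation $\geq pe_K$); what they have is monomial degree $\geq p$ (resp.\ $\geq p^2$ for $h_{\ell-2}$), which is what yields the valuation bounds of Lemma \ref{invariant} used in the induction.
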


\noindent {\bf Acknowledgement}: Our sincere thanks to Prof. C. S. Rajan for his much help and useful discussions. The second author would also like to thank Joel Riou for his interest and useful comments. 

\section{Preliminaries}

In this section we show that $\ilim H^1(G,W_n(\sO_L))$ coincides with $H^1(G,W(\sO_L))$ (see Corollary \ref{limit}). Note that in general group cohomology does not commute with inverse limits. 

\begin{prop}\label{limprop}
Let $G$ be a finite group and $\{A_i\}_{i\in \N}$ be an inverse system of $G$ modules indexed by $\N$. For $j>i$, let $\phi_{ji}:A_j\to A_i$ denote the given maps. Then the following two statements hold. 
\begin{enumerate}
\item[(i)] If $\phi_{ji}$ is surjective for all $j>i$ then $$ H^1(G,\ilim A_i)\to \ilim H^1(G,A_i)$$ is surjective.
\item[(ii)] If the induced maps $\phi_{ji}^G:A_j^G\to A_i^G$ are surjective for all $j>i$, then 
$$ H^1(G,\ilim A_i)\to \ilim H^1(G,A_i)$$
is injective.
\end{enumerate}
\end{prop}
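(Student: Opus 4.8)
The plan is to work with the explicit description of first cohomology by crossed homomorphisms, exploiting that $G$ is finite so that the relevant cochain functors commute with inverse limits. Write $A=\ilim A_i$ with projections $\phi_i\colon A\to A_i$, and recall $H^1(G,A)=Z^1(G,A)/B^1(G,A)$, where $Z^1(G,A)$ consists of maps $c\colon G\to A$ satisfying $c(\sigma\tau)=c(\sigma)+\sigma c(\tau)$ and $B^1(G,A)$ of the maps $\partial a\colon \sigma\mapsto \sigma a-a$. Since $Z^1(G,A)$ is the kernel of a map between finite products of copies of $A$ (here finiteness of $G$ is essential), the functor $A\mapsto Z^1(G,A)$ commutes with inverse limits, so $Z^1(G,A)=\ilim Z^1(G,A_i)$; note also that $\partial a=0$ iff $a\in A^G$.

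For (i), an element of $\ilim H^1(G,A_i)$ is represented by cocycles $c_i\in Z^1(G,A_i)$ with $\phi_{ji}(c_j)-c_i\in B^1(G,A_i)$. First I would replace the $c_i$ by a genuinely compatible family of cocycles in the same classes. Construct them inductively: set $c_1'=c_1$, and given $c_i'$ with $[c_i']=[c_i]$, write $\phi_{i+1,i}(c_{i+1})-c_i'=\partial a_i$ for some $a_i\in A_i$; using surjectivity of $\phi_{i+1,i}$, lift $a_i$ to $a_{i+1}\in A_{i+1}$ and set $c_{i+1}'=c_{i+1}-\partial a_{i+1}$. Then $[c_{i+1}']=[c_{i+1}]$ and $\phi_{i+1,i}(c_{i+1}')=c_i'$. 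The resulting compatible family defines $c'\in \ilim Z^1(G,A_i)=Z^1(G,A)$, whose class maps to $([c_i])_i$; this gives surjectivity.

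For (ii), suppose $[c]\in H^1(G,A)$ maps to zero, so for each $i$ the image $c_i=\phi_i(c)$ equals $\partial a_i$ for some $a_i\in A_i$. The obstruction to $c$ being a coboundary is the incompatibility $d_i:=\phi_{i+1,i}(a_{i+1})-a_i$; applying $\partial$ and using $\phi_{i+1,i}(c_{i+1})=c_i$ shows $\partial d_i=0$, i.e. $d_i\in A_i^G$. I would then correct the $a_i$ by invariant elements: introduce the torsors $T_i=a_i+A_i^G$ under the system $\{A_i^G\}$. Compatibility of the $c_i$ gives $\phi_{i+1,i}(T_{i+1})\subseteq T_i$, and the hypothesis that $\phi_{i+1,i}^G\colon A_{i+1}^G\to A_i^G$ is surjective shows the transition maps $T_{i+1}\to T_i$ are surjective. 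Since an inverse system of nonempty sets indexed by $\N$ with surjective transition maps has nonempty limit, I can choose $(a_i')\in \ilim T_i$; this is a compatible family with $\partial a_i'=c_i$, hence defines $a\in A$ with $\partial a=c$. Thus $[c]=0$ and the map is injective.

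The main obstacle is the bookkeeping in (ii): recognizing that the failure of the coboundary-lifts $a_i$ to be compatible is measured precisely by a $\lim^1$-type obstruction for the inverse system of invariants $\{A_i^G\}$, and that the surjectivity hypothesis on $\phi_{ji}^G$ is exactly what forces this obstruction to vanish (equivalently, $\lim^1 A_i^G=0$). Everything else — the commutation of $Z^1(G,-)$ with limits and the inductive lifting in (i) — is routine once $G$ is known to be finite.
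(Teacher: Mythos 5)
Your proposal is correct and takes essentially the same route as the paper: part (i) is the same inductive adjustment of representing cocycles by coboundaries, with the correcting element lifted via surjectivity of $\phi_{ji}$, and in part (ii) your appeal to the nonemptiness of $\ilim T_i$ for the torsors $T_i = a_i + A_i^G$ with surjective transitions, once a thread is chosen by successive lifting, is exactly the paper's inductive construction of compatible elements $b_n$ corrected by invariants lifted through $\phi_{ji}^G$ (your $\lim^1$-vanishing framing is just a conceptual repackaging of that induction). One cosmetic inaccuracy: finiteness of $G$ is not actually essential for $Z^1(G,-)$ to commute with inverse limits, since arbitrary direct products already do, though the claim is harmless here.
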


\begin{cor}\label{limit}
Let $L/K$ be a finite Galois extension of complete discrete valued fields. Then the natural map 
$$\Phi: H^1(G,W(\sO_L))\to \ilim H^1(G,W_n(\sO_L))$$ is an isomorphism.
\end{cor}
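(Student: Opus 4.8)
The plan is to deduce Corollary~\ref{limit} directly from Proposition~\ref{limprop} by taking the inverse system $A_n = W_n(\sO_L)$ with the usual restriction (truncation) maps $\phi_{n+1,n}\colon W_{n+1}(\sO_L)\to W_n(\sO_L)$ that drop the last Witt component. I would verify the two hypotheses of the proposition separately. First, the map $\Phi$ is the natural comparison map arising because $W(\sO_L)=\ilim W_n(\sO_L)$ as $G$-modules, so its source and target are exactly the two sides appearing in the proposition.

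To apply part~(i), I need the truncation maps $\phi_{n+1,n}$ to be surjective as maps of abelian groups. This is immediate from the definition of Witt vectors: a length-$(n+1)$ Witt vector $(a_0,\dots,a_n)$ maps to $(a_0,\dots,a_{n-1})$, and every length-$n$ vector is hit (extend arbitrarily in the last coordinate). These maps are $G$-equivariant since $G$ acts coordinatewise (functorially) on Witt vectors. Hence Proposition~\ref{limprop}(i) gives that $\Phi$ is surjective.

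For part~(ii) I need the maps on invariants $\phi_{n+1,n}^G\colon W_{n+1}(\sO_L)^G \to W_n(\sO_L)^G$ to be surjective. Here $W_n(\sO_L)^G = W_n(\sO_L^G) = W_n(\sO_K)$, using that the $G$-action is coordinatewise and that taking invariants commutes with the Witt-vector construction because $\sO_L^G=\sO_K$. Under this identification the transition map on invariants is again the truncation $W_{n+1}(\sO_K)\to W_n(\sO_K)$, which is surjective by the same elementary argument as above. Thus Proposition~\ref{limprop}(ii) applies and $\Phi$ is injective.

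Combining the two parts, $\Phi$ is a bijective homomorphism, hence an isomorphism of abelian groups, which is exactly the assertion of the corollary. The only point requiring a moment's care---and the step I would flag as the main thing to get right---is the identification $W_n(\sO_L)^G=W_n(\sO_K)$ needed for hypothesis~(ii): one must check that a Witt vector is $G$-fixed precisely when each of its ghost/Witt components is fixed, which follows from the coordinatewise nature of the $G$-action on $W_n$, together with $\sO_L^G=\sO_K$ from Galois theory. Everything else is a formal consequence of Proposition~\ref{limprop}.
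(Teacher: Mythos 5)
Your proposal is correct and matches the paper's own proof essentially step for step: surjectivity of the truncation maps gives surjectivity of $\Phi$ via Proposition~\ref{limprop}(i), and the identification $W_n(\sO_L)^G=W_n(\sO_K)$ (justified, as you note, by the coordinatewise Galois action) together with surjectivity of $W_{n+1}(\sO_K)\to W_n(\sO_K)$ gives injectivity via part~(ii). No gaps; your extra care about why invariants commute with the Witt construction is a point the paper states without elaboration.
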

\begin{proof}
By construction of Witt vectors, the projection maps $$W_{n+1}(\sO_L) \to W_n(\sO_L)$$ are surjective. Thus by the above proposition, $\Phi$ is surjective. In order to prove injectivity of $\Phi$ we need to prove surjectivity of $$W_{n+1}(\sO_L)^G \to W_n(\sO_L)^G$$ 
This follows from the fact that $W_i(\sO_L)^G=W_i(\sO_K)$ for all $i$ and from the surjectivity of the projection maps $W_{n+1}(\sO_K)\to W_n(\sO_K)$.
\end{proof}

\begin{proof}[Proof of Proposition \ref{limprop}] $(i)$ Suppose we are given an element $\alpha \in \ilim H^1(G,A_i)$. This is equivalent to given $\alpha_i\in H^1(G,A_i)$ for all $i$ such that $\alpha_{i+1}\mapsto \alpha_i$. We now inductively construct cocycles $a^i_g$ representing the class $\alpha_i$ as follows. For $i=1$, choose $a^1_g$ arbitrarily. Now, suppose $a^n_g$ has been constructed. Then construct $a^{n+1}_g$ as follows. First start with any cocycle $b^{n+1}_g$ which represents $\alpha_{n+1}$. For an element $b\in A_{n+1}$, let $\overline{b}$ denote its image in $A_n$. Thus $\overline{b}^{n+1}_g$ is a cocycle in $A_n$ which represents the same class as that represented by $a^n_g$. Thus, there exists $c\in A_n$ such that 
$$ \overline{b}^{n+1}_g-a^n_g = gc-c$$
Since by assumption, $A_{n+1}\to A_n$ is surjective, there exists an element $d\in A_{n+1}$ such that $\overline{d}=c$. Now define 
$$ a^{n+1}_g = b^{n+1}_g-(gd-d)$$
This completes the inductive construction of the cocyles $a^i_g$. The cocyles have the property that for all $i$ and $g$, $$ a^{i+1}_g \mapsto a^i_g$$ and thus they define a cocyles with values in $\ilim A_i$ whose class obviously maps to the element $\alpha$ we started with. \\

\noindent $(ii)$ Suppose $\alpha$ is a class in $H^1(G,\ilim A_i)$ which maps to zero in $\ilim H^1(G,A_i)$, or equivalently maps to zero in $H^1(G,A_i)$ for each $i$. Under the given assumption we will show that $\alpha=0$. Choose a cocyle $a_g$ representing $\alpha$. By abuse of notation, we will denote the image of $a_g$ in $A_n$ by $\overline{a}_g$. The $n$ will be clear from context. 
 For each $n$, we will now inductively construct an element $b_n\in A_n$ such that $$a_g= gb_n-b_n \ \ \ \forall \ g\in G$$ and for all $n$, $b_{n+1}$ maps to $b_n$. For $n=1$, we know that the image of $a_g$ in $A_1$ is a coboundary. Thus there exists an element $b_1\in A_1$ such that 
$$ \overline{a}_g = gb_1-b_1 \ \ \ \forall \ g\in G$$
Now suppose we have defined $b_n$. To define $b_{n+1}$ we first choose an element $c_{n+1}\in A_{n+1}$ such that 
$$ \overline{a}_g = gc_{n+1}-c_{n+1}  \ \ \ \forall \  g\in G$$
However the image of $c_{n+1}$ in $A_n$, denoted by $\overline{c}_{n+1}$ satisfies
$$ g\overline{c}_{n+1}-\overline{c}_{n+1} = gb_n-b_n$$
which means, there exists a $d\in A_n^G$ such that 
$$ b_n=\overline{c}_{n+1}+d$$
Since the map $A_{n+1}^G\to A_n^G$ is assumed to be surjective, we can lift $d$ to an element $\tilde{d}\in A_{n+1}^G$. Now define 
$$ b_{n+1} = c_{n+1}+\tilde{d}$$
The $b_n'$ defined above are compatible elements and hence define an element $b$ of $\ilim A_i$. Also, from the construction it is clear that 
$$ a_g = gb-b \ \ \ \forall \ g\in G$$
holds, since it holds after taking image in $A_i$ for all $i$. Thus the cocycle $a_g$ is actually a coboundary and hence the class $\alpha$ we started with is trivial. 
\end{proof}

\section{Remarks on addition of Witt vectors}

The main observation of this section is Lemma \ref{imp2}, which lies at the heart of the proof of Theorem \ref{main}. We first recall from (\cite{serre},II) how addition of Witt vectors is defined. For every positive integer $n$, define polynomials 
$w_n\in \Z[X_1,...,X_n]$ by 
$$ w_n(X_1,...,X_n) = X_1^{p^{n-1}}+pX_2^{p^{n-2}}+p^2X_3^{p^{n-3}} + \cdots + p^{n-1}X_n $$
One now defines addition of Witt vectors (thanks to Theorem \ref{Witt}) in such a way that if 
\begin{equation}\label{eqn1} (X_1,...,X_n) + (Y_1,...,Y_n) = (Z_1,...,Z_n) \end{equation}
then 
$$ w_i(X_1,...,X_i)+w_i(Y_1,...,Y_i)=w_i(Z_1,...,Z_i) \ \ \forall \ i \leq n$$

\begin{thm} {\rm (\cite{serre}, II.\S6 )} \label{Witt} For every positive integer $n$, there exists a unique $\phi_n \in \Z[X_1,..,X_n,Y_1,..,Y_n]$ such that 
$$ w_n(X_1,...,X_n)+w_n(Y_1,...,Y_n)= w_n(\phi_1,...,\phi_n) \ \ \forall \ n\in \N$$
\end{thm}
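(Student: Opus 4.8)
The plan is to first construct the $\phi_n$ over $\Q$ by a recursion, where existence and uniqueness are transparent, and then to carry out the real work, namely checking that the resulting polynomials actually have integer coefficients. Writing $w_n(X)$ and $w_n(Y)$ as shorthand for $w_n(X_1,\dots,X_n)$ and $w_n(Y_1,\dots,Y_n)$, observe that
$$ w_n(\phi_1,\dots,\phi_n) = \phi_1^{p^{n-1}} + p\phi_2^{p^{n-2}} + \cdots + p^{n-1}\phi_n, $$
so the unknown $\phi_n$ enters linearly with coefficient $p^{n-1}$, while every other term involves only $\phi_1,\dots,\phi_{n-1}$. Hence, working in $\Q[X_1,\dots,Y_n]$, one may solve recursively:
$$ p^{n-1}\phi_n = \big(w_n(X)+w_n(Y)\big) - \phi_1^{p^{n-1}} - p\phi_2^{p^{n-2}} - \cdots - p^{n-2}\phi_{n-1}^{p}. $$
Since $p^{n-1}$ is invertible in $\Q$, this determines $\phi_n$ uniquely once $\phi_1,\dots,\phi_{n-1}$ are fixed; so there is a unique sequence $(\phi_n)$ in $\Q[X,Y]$ satisfying the required identity, and in particular uniqueness over $\Z$ is automatic. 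What remains, and this is the entire difficulty, is to show $\phi_n\in\Z[X_1,\dots,X_n,Y_1,\dots,Y_n]$, i.e. that the right-hand side above is divisible by $p^{n-1}$ in the \emph{integral} polynomial ring.

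For the integrality I would introduce the ring endomorphism $\varphi$ of $\Z[X_1,\dots,Y_n]$ sending each variable to its $p$-th power, and record the two elementary congruences on which everything rests: $\varphi(f)\equiv f^{p}\pmod{p}$ for every $f$, and, more generally, if $a\equiv b\pmod{p^{k}}$ then $a^{p}\equiv b^{p}\pmod{p^{k+1}}$. Setting $\Phi_n:=w_n(X)+w_n(Y)$, a direct computation using the exact identity $w_n(X)-\varphi(w_{n-1}(X))=p^{n-1}X_n$ (and its analogue for $Y$) gives the key congruence
$$ \Phi_n \equiv \varphi(\Phi_{n-1}) \pmod{p^{n-1}}. $$

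With this in hand I would prove $\phi_n\in\Z[X,Y]$ by induction on $n$. Assume $\phi_1,\dots,\phi_{n-1}$ have integer coefficients. Combining $\varphi(\phi_i)\equiv\phi_i^{p}\pmod{p}$ with the power congruence above, applied $n-1-i$ times, shows $p^{i-1}\varphi(\phi_i^{p^{n-1-i}})\equiv p^{i-1}\phi_i^{p^{n-i}}\pmod{p^{n-1}}$ for each $i<n$; summing over $i$ yields
$$ \sum_{i=1}^{n-1} p^{i-1}\phi_i^{p^{n-i}} \equiv \varphi\big(w_{n-1}(\phi_1,\dots,\phi_{n-1})\big) = \varphi(\Phi_{n-1}) \pmod{p^{n-1}}, $$
the last equality holding because $w_{n-1}(\phi_1,\dots,\phi_{n-1})=\Phi_{n-1}$ by the recursion. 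Subtracting this from the key congruence $\Phi_n\equiv\varphi(\Phi_{n-1})$ shows that $\Phi_n-\sum_{i=1}^{n-1}p^{i-1}\phi_i^{p^{n-i}}$ is divisible by $p^{n-1}$ in $\Z[X,Y]$, which is exactly the assertion that $\phi_n$ has integer coefficients.

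The only genuine obstacle is this integrality step, and within it the bookkeeping of $p$-adic valuations: one must track that the error introduced by replacing the Frobenius lift $\varphi(\phi_i)$ by the honest $p$-th power $\phi_i^{p}$ gains exactly enough powers of $p$, via $a\equiv b\pmod{p^{k}}\ \Rightarrow\ a^{p}\equiv b^{p}\pmod{p^{k+1}}$, to offset the factor $p^{i-1}$ and land uniformly modulo $p^{n-1}$. Everything else, namely existence and uniqueness over $\Q$ and the verification of the key congruence for $\Phi_n$, is formal.
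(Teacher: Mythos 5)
Your proof is correct, and it is essentially the argument of the source the paper cites for this statement (Serre, \emph{Local Fields}, II, \S 6), namely: solve for $\phi_n$ recursively over $\Q$, then prove integrality by induction using the Frobenius lift $\varphi(X_i)=X_i^p$, the identity $w_n(X)-\varphi(w_{n-1}(X))=p^{n-1}X_n$, and the congruence $a\equiv b \pmod{p^k} \Rightarrow a^p\equiv b^p \pmod{p^{k+1}}$. The paper itself gives no proof but defers to Serre, so your blind reconstruction matches the intended one.
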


In other words, in equation \eqref{eqn1} above 
$$ Z_i = \phi_i(X_1,...,X_i,Y_1,..,Y_i)$$
Note that since $\phi_i's$ are polynomials with integral coefficients, the expression makes sense in all characteristics.

We now consider addition of $p$ Witt vectors. Let 
$$ (x_{11},...,x_{1n})+...+(x_{p1},...,x_{pn}) = (z_1,...,z_n) $$
By above discussion, for every $i\leq n$, there exist polynomials in $ni$ variables, $g_i\in \Z[X_{11},..,X_{1i},...X_{p1},...,X_{pi}]$ such that 
$$z_i = g_i(x_{11},...,x_{1i},...,x_{p1},...,x_{pi})$$
The following observation is about the nature of these polynomials.
\begin{lemma} \label{imp2} Let $R$ be a ring, $p$ be a prime, $n \in \N$ and $W_n(R)$ be the ring of Witt vectors of length $n$. Let $\underline{x}_i = \left(x_{i1},x_{i2},\cdots,x_{in}\right) \in W_n(R)$ for $1 \leq i \leq p$. Let
 $$\left(z_1,z_2,\cdots,z_n\right) := \sum_{i=1}^{p}(x_{i1},x_{i2},\cdots,x_{in} )$$
\begin{enumerate}
\item For all $1 \leq \ell \leq n$ there exists a polynomial expression $f_\ell \in \Z \left[\{x_{ij} \}\right]$ where $1\leq i \leq p, 1 \leq j \leq \ell-1$ such that $$z_{\ell} = \sum_{i=1}^{p}{x_{i\ell}} + f_{\ell} $$
\noindent where each monomial of $f_l$ has degree  $\geq p$. 
\item There exists a polynomial expression $h_{\ell-2} \in \Z\left[\{ x_{ij} \}\right]$ where $1 \leq i \leq p, 1 \leq j \leq {\ell-2}$ such that
 $$ f_{\ell} = \frac{\sum_{i= 1}^{p}x_{i,{\ell-1}}^p - (\sum_{i=1}^{p} x_{i,{\ell-1}})^p }{p} + \frac{1}{p} \sum_{j=1}^{p-1}\binom{p}{j} \sum_{i-1}^{p}x_{i,{\ell-1}}^{p-j}f_{\ell-1}^j + h_{\ell-2}$$
and each monomial appearing in $h_{\ell-2}$ has degree $\geq p^2$.
\end{enumerate}
\end{lemma}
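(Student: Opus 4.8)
The plan is to exploit the weight grading on $\Z[\{x_{ij}\}]$ in which $x_{ij}$ is assigned weight $p^{j-1}$. With this grading each ghost polynomial $w_\ell$ is homogeneous of weight $p^{\ell-1}$, since every summand $p^{k-1}X_k^{p^{\ell-k}}$ has weight $p^{\ell-k}\cdot p^{k-1}=p^{\ell-1}$. First I would prove, by induction on $\ell$ using the uniqueness in Theorem \ref{Witt}, that the Witt sum component $z_\ell$ is itself homogeneous of weight $p^{\ell-1}$: the ghost identity expresses $p^{\ell-1}z_\ell$ as a $\Z$-combination of the quantities $x_{ik}^{p^{\ell-k}}$ and $z_k^{p^{\ell-k}}$ for $k<\ell$, each of weight $p^{\ell-1}$ by the inductive hypothesis. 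Granting this, part (1) is immediate: the only variables of weight $p^{\ell-1}$ are the $x_{i\ell}$, and each can occur in a weight-$p^{\ell-1}$ monomial only linearly and alone, so subtracting $\sum_i x_{i\ell}$ leaves $f_\ell$ supported on the variables $x_{ij}$ with $j\le\ell-1$, which have weight at most $p^{\ell-2}$. A monomial of weight $p^{\ell-1}$ built from variables of weight $\le p^{\ell-2}$ must have total degree at least $p^{\ell-1}/p^{\ell-2}=p$, giving the degree bound; integrality of $f_\ell$ comes from Theorem \ref{Witt}.

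For part (2) I would start from the recursion $w_\ell(X_1,\dots,X_\ell)=w_{\ell-1}(X_1^p,\dots,X_{\ell-1}^p)+p^{\ell-1}X_\ell$, which is checked by inspection. Substituting it into the defining identity $\sum_{i=1}^p w_\ell(\underline{x}_i)=w_\ell(\underline{z})$ and cancelling $p^{\ell-1}\sum_i x_{i\ell}$ on both sides yields
$$ p^{\ell-1}f_\ell=\sum_{k=1}^{\ell-1}p^{k-1}\Big(\sum_{i=1}^p x_{ik}^{p^{\ell-k}}-z_k^{p^{\ell-k}}\Big). $$
The plan is to single out the top term $k=\ell-1$, which is the only one carrying the variables $x_{i,\ell-1}$. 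Writing $z_{\ell-1}=\sum_i x_{i,\ell-1}+f_{\ell-1}$ and expanding $z_{\ell-1}^{\,p}$ by the binomial theorem splits this contribution, after division by $p^{\ell-1}=p\cdot p^{\ell-2}$, into the first two displayed terms of the asserted formula together with a residual multiple of $f_{\ell-1}^{\,p}$. I would then define $h_{\ell-2}$ to be everything that remains, namely that residual together with the ghost terms for $k\le\ell-2$, so that the displayed decomposition holds by construction.

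It remains to verify the two asserted properties of $h_{\ell-2}$. That $h_{\ell-2}$ involves only the variables $x_{ij}$ with $j\le\ell-2$ follows because $f_{\ell-1}$ is supported on such variables by part (1) at level $\ell-1$, and because $z_k$ for $k\le\ell-2$ depends only on $x_{ij}$ with $j\le k$. Integrality of $h_{\ell-2}$ comes by difference: $f_\ell$ is integral by Theorem \ref{Witt}, the leading difference is integral since the nondiagonal multinomial coefficients are divisible by $p$, and each $\tfrac1p\binom{p}{j}$ with $1\le j\le p-1$ is an integer, so the extracted main part is integral. Finally the degree bound is again a weight count: $h_{\ell-2}$ is homogeneous of weight $p^{\ell-1}$ in variables of weight at most $p^{\ell-3}$, so every monomial has total degree at least $p^{\ell-1}/p^{\ell-3}=p^2$.

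The main obstacle I anticipate is bookkeeping rather than conceptual: keeping the powers of $p$ straight when dividing the ghost identity by $p^{\ell-1}$, and checking the precise shape and sign of the mixed cross-terms arising from the expansion of $z_{\ell-1}^{\,p}$ (in particular whether the factor should read $(\sum_i x_{i,\ell-1})^{p-j}$ or $\sum_i x_{i,\ell-1}^{p-j}$, which coincide when $p=2$). Once the weight-homogeneity of each $z_\ell$ is established, however, it does all the heavy lifting, converting both degree estimates into one-line divisibility-of-exponents arguments and pinning down exactly which variables can appear in $f_\ell$ and in $h_{\ell-2}$.
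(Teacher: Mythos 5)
Your proposal is correct, and its computational skeleton coincides with the paper's: both start from the ghost identity to get $p^{\ell-1}f_\ell=\sum_{k=1}^{\ell-1}p^{k-1}\bigl(\sum_{i=1}^p x_{ik}^{p^{\ell-k}}-z_k^{p^{\ell-k}}\bigr)$, substitute $z_{\ell-1}=\sum_i x_{i,\ell-1}+f_{\ell-1}$, expand $z_{\ell-1}^p$ binomially, set $h_{\ell-2}=-\frac{1}{p}f_{\ell-1}^p+(\text{ghost terms for }k\le\ell-2)$, and deduce integrality of $h_{\ell-2}$ by difference, using $p\mid\binom{p}{j}$ for $1\le j\le p-1$. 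Where you genuinely differ is in the degree estimates: the paper gets them by direct inspection of the explicit expressions (each monomial of $\frac{1}{p^{\ell-k}}\bigl(\sum_i x_{ik}^{p^{\ell-k}}-z_k^{p^{\ell-k}}\bigr)$ visibly has degree $\ge p^{\ell-k}$, and $f_{\ell-1}^p$ has monomials of degree $\ge p^2$), whereas you introduce the isobaric grading $x_{ij}\mapsto p^{j-1}$, prove by induction that each $z_\ell$ is homogeneous of weight $p^{\ell-1}$, and read off both bounds as $p^{\ell-1}/p^{\ell-2}=p$ and $p^{\ell-1}/p^{\ell-3}=p^2$. This buys something real: it pins down exactly which variables can occur in $f_\ell$ (the paper asserts this without comment) and turns both degree counts into one-line weight computations, at the modest cost of one extra induction. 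Finally, the ambiguity you flagged resolves exactly as your derivation predicts: the binomial expansion forces the cross term to be $-\frac{1}{p}\sum_{j=1}^{p-1}\binom{p}{j}\bigl(\sum_{i=1}^p x_{i,\ell-1}\bigr)^{p-j}f_{\ell-1}^j$ — power of the sum, with a minus sign, and with $j$ running from $1$ — so the printed statement contains typos (the sign, the inner $\sum x_{i,\ell-1}^{p-j}$, and, in the paper's own proof display, the lower limit $j=2$). Your version is the one actually used downstream: under the substitution $z_{\ell-1}=0$ one has $f_{\ell-1}=-tr(x_{\ell-1})$, and the cross terms collapse to $-C\cdot tr(x_{\ell-1})^p$ with $C=\frac{1}{p}\sum_{j=1}^{p-1}(-1)^j\binom{p}{j}$, matching Lemma \ref{C}; note also that the $j=1$ term genuinely belongs with the cross terms and cannot be absorbed into $h_{\ell-2}$, since its monomials can have degree as low as $(p-1)+p<p^2$.
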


\begin{proof} 
$(1)$ By definition of addition of Witt vectors in Witt ring we have 
$$ \sum_{i=1}^p w_{\ell}(x_{i1},...,x_{p\ell}) = w_{\ell}(z_1,...,z_\ell)$$
Using the expression for the polynomials $w_{\ell}$ and rearranging, we get
$$ z_{\ell} =  \sum_{i=1}^{p}{x_{i\ell}} + f_{\ell} $$
where 
$$ f_{\ell} = \frac{1}{p^{\ell-1}}\left(\sum_{i=1}^p x_{i1}^{p^{\ell-1}}-z_1^{p^{\ell-1}}\right)+...+\frac{1}{p}\left(\sum_{i=1}^px_{i(\ell-1)}^p-z_{\ell-1}^p\right) $$

\noindent The claim that $f_{\ell}$ has integral coefficients follows from Theorem \ref{Witt}. Note that each $z_t$, $t\leq \ell$ in the above expression is again a polynomial expression in the variables $x_{ij}'s, j\leq t$. It is straightforward to observe from the expression of $f_{\ell}$ that every monomial appearing in the expression has degree $\geq p$.

\noindent $(2)$  Substitute $z_{\ell-1}=\sum_{j=1}^px_{j(\ell-1)}+f_{\ell-1}$ in the expression of $f_{\ell}$ and rewrite $f_{\ell}$ as 
$$f_{\ell} = \frac{\left(\sum_{i=1}^{p}x_{i(\ell-1)}^p\right)- \left(\sum_{i=1}^{p}x_{i(\ell-1)}\right)^p }{p}- \frac{1}{p}\sum_{j=2}^{p-1}\binom{p}{j}\left( \sum_{i=1}^p x_{i(\ell-1)}\right) ^{p-j}\cdot f_{\ell-1}^j  + h_{\ell-2}$$
\noindent where \\

$
\begin{array}{ll} 
h_{\ell-2} =  & -\frac{1}{p}f_{\ell-1}^p + \frac{1}{p^2}\left(x_{1(\ell-2)}^{p^2}+x_{2(\ell-2)}^{p^2}+\cdots + x_{p(\ell-2)}^{p^2}-z_{\ell-2}^{p^2} \right)       + \cdots \\
 & + \frac{1}{p^{\ell-1}}\left(x_{11}^{p^{\ell-1}}+x_{21}^{p^{\ell-1}}+ \cdots+x_{p1}^{p^{\ell-1}}-z_1^{p^{\ell-1}}\right)
\end{array} $

\noindent Note that since $p$ is a prime number, every binomial coefficient $\binom{p}{j}$ with $1\leq j<p$, is divisible by $p$. Thus the first two terms in the above expressions of $f_{\ell}$ have integral coefficients. Since we know that $f_{\ell}$ has integral coefficients, it follows that $h_{\ell-2}$ has integral coefficients too. Moreover, since all monomials appearing in $f_{\ell-1}$ have degree $\geq p$, all monomials appearing in $f_{\ell-1}^p$ have degree $\geq p^2$. It is also clear that for $1\leq i\leq \ell-2$, all monomials in 
$$ \frac{1}{p^{\ell-i}}\left(x_{1i}^{p^{\ell-i}}+x_{2i}^{p^{\ell-i}}+ \cdots+x_{pi}^{p^{\ell-i}}-z_i^{p^{\ell-i}}\right)$$
have degree $\geq p^2$. This shows that all monomials appearing in the expression of $h_{\ell-2}$ have degree $\geq p^2$.
\end{proof}

\section{Proof of the main theorem}
We will prove Theorem \ref{main} in this section. 

\begin{lem}\label{rd}
Let $p$ be a prime number and $L/K$ be a finite Galois extension of complete discrete fields with $G=\Gal(L/K)$. Suppose that $k_L/k_K$ is separable. Then the following two statements are equivalent. 
\begin{enumerate}
\item[(i)] $H^1(G,W(\sO_L))=0$ for all extensions $L/K$ as above.
\item[(ii)] $H^1(G,W(\sO_L))=0$ for all $L/K$ as above which are ramified and of degree $p$. 
\end{enumerate}
\end{lem}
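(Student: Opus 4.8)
The implication (i) $\Rightarrow$ (ii) is immediate, since (ii) is the special case of (i) for ramified extensions of degree $p$. The plan for the converse (ii) $\Rightarrow$ (i) is a dévissage built on the five-term inflation--restriction sequence. For a normal subgroup $H \triangleleft G$ the Galois action on $W(\sO_L)$ is coordinatewise, so $W(\sO_L)^H = W(\sO_L^H) = W(\sO_{L^H})$, where $L^H/K$ is again Galois (with group $G/H$) and $k_{L^H}/k_K$ is separable. The exact sequence
$$0 \to H^1(G/H, W(\sO_{L^H})) \to H^1(G, W(\sO_L)) \to H^1(H, W(\sO_L))$$
then shows that $H^1(G,W(\sO_L))$ vanishes as soon as it vanishes for the two smaller extensions $L^H/K$ and $L/L^H$. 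All of the reductions below peel off subextensions in this way.

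First I would reduce to the case where $G$ is a $p$-group. The group $H^1(G, W(\sO_L))$ is killed by $|G|$, and I claim it is $p$-primary. Indeed, every prime $\ell \neq p$ acts invertibly on $\sO_L$ (a local ring of residue characteristic $p$), hence, by the $G$-equivariant Verschiebung short exact sequences $0 \to \sO_L \xrightarrow{V^{n-1}} W_n(\sO_L) \to W_{n-1}(\sO_L) \to 0$ and induction on $n$, on each $W_n(\sO_L)$, and so on $W(\sO_L) = \ilim W_n(\sO_L)$. Thus $\ell$ acts invertibly on $H^1(G, W(\sO_L))$, which combined with $|G|\cdot H^1 = 0$ forces $H^1(G, W(\sO_L))$ to be $p$-primary. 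Consequently, for a $p$-Sylow subgroup $P \leq G$ the restriction to $H^1(P, W(\sO_L))$ is injective (because $\mathrm{cor}\circ\mathrm{res}=[G:P]$ is prime to $p$), and, after replacing $K$ by $L^P$, it suffices to treat $G=P$ a $p$-group.

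Next I would dispose of the unramified case, which serves as an input below. If $L/K$ is unramified, the separable extension $k_L/k_K$ admits a normal basis, and this lifts to a normal integral basis exhibiting $\sO_L$ as a free $\sO_K[G]$-module; in particular $H^1(G, \sO_L)=0$. Feeding this into the Verschiebung sequences gives $H^1(G, W_n(\sO_L))=0$ for all $n$ by induction, whence $H^1(G, W(\sO_L))=0$ by Corollary \ref{limit}. Finally, for $G$ a $p$-group I would induct on $|G|$. Choosing a normal subgroup $N\triangleleft G$ of index $p$, the quotient extension $L^N/K$ is Galois of degree $p$; as $k_{L^N}/k_K$ is separable the fundamental identity $ef=p$ holds, so $L^N/K$ is either unramified or totally ramified cyclic of degree $p$. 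In the former case $H^1(G/N, W(\sO_{L^N}))=0$ by the unramified case just treated, and in the latter it vanishes by hypothesis (ii). Since $H^1(N, W(\sO_L))=0$ by the inductive hypothesis applied to $L/L^N$, the inflation--restriction sequence of the first paragraph yields $H^1(G, W(\sO_L))=0$, completing the reduction.

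The formal cohomological dévissage is routine; the two places demanding genuine care are the inputs feeding it. The main obstacle is the unramified case: one must know that separability of the residue extension forces $\sO_L$ to be induced (equivalently, cohomologically trivial) and then check that this triviality propagates through the Verschiebung filtration to $W(\sO_L)$. One must also verify the structural facts underpinning the dévissage --- that the coordinatewise $G$-action identifies $W(\sO_L)^H$ with $W(\sO_{L^H})$, that separability of the residue extension is inherited by all sub- and quotient extensions (guaranteeing $ef=p$ for the degree-$p$ quotients), and that each prime $\ell\neq p$ acts invertibly on every $W_n(\sO_L)$ so that $H^1(G,W(\sO_L))$ is indeed $p$-primary.
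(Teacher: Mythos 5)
Your proof is correct, but it takes a genuinely different route from the paper's at both reduction steps. The paper peels off the maximal tamely ramified subextension $L^t/K$ in one stroke: tameness makes $\sO_{L^t}$ a projective $\sO_K[G/H]$-module (\cite{frohlich}, I, Theorem 3), which kills $H^1(G/H,W(\sO_{L^t}))$, and after replacing $K$ by $L^t$ the extension is totally wildly ramified, so in the subsequent index-$p$ d\'evissage every degree-$p$ quotient is automatically totally ramified and hence covered by (ii). You instead reduce to a $p$-group by Sylow restriction, using the observation that $H^1(G,W(\sO_L))$ is $p$-primary --- primes $\ell\neq p$ act invertibly on $\sO_L$, hence through the Verschiebung filtration on each $W_n(\sO_L)$ and on the limit --- together with $\mathrm{cor}\circ\mathrm{res}=[G:P]$; since this reduction does not remove the unramified part, you must (and do) treat unramified degree-$p$ quotients separately via the lifted normal integral basis. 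Each approach buys something: the paper's is shorter modulo the cited projectivity theorem and never needs a separate unramified case, while yours is more self-contained, replacing integral representation theory of tame extensions by the classical unramified normal basis plus a standard transfer argument, and it makes explicit a point the paper leaves implicit, namely how cohomological triviality propagates from $\sO$ to $W(\sO)$ through the Verschiebung sequences $0\to\sO_L\to W_n(\sO_L)\to W_{n-1}(\sO_L)\to 0$ and Corollary \ref{limit}. Note also that your Sylow subgroup $P$ need not be normal, which is harmless since at that step you only use injectivity of restriction on the $p$-primary group $H^1(G,W(\sO_L))$, not the inflation--restriction sequence; the rest of your d\'evissage (the identification $W(\sO_L)^H=W(\sO_{L^H})$, heredity of residue separability, and $ef=p$ for the degree-$p$ quotients, valid here as $\mathrm{char}\,K=0$ makes $\sO_L$ finite over $\sO_K$) is sound.
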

\begin{proof}
$(i)\implies (ii)$ is obvious. Now we prove $(i)$ assuming $(ii)$.

Let $L/K$ be any Galois extension of complete discrete valued fields. Let $L^t$ be the maximal subfield of $L$ which is tamely ramified over $K$. The extension $L^t/K$ is Galois and let $H=\Gal(L/L^t)$. Since $L^t/K$ is tame, $\sO_{L^t}$ is a projective $\sO_K[G/H]$ module (see \cite{frohlich}, I. Theorem(3)) which can be used to show the vanishing of $H^1(G/H,W(\sO_{L^t}))$. Moreover, because of the following inflation-restriction exact sequence 
$$ 0 \to H^1(G/H,W(\sO_{L^t})) \stackrel{inf}{\longrightarrow} H^1(G,W(\sO_L)) \stackrel{res}{\longrightarrow} H^1(H,W(\sO_L))$$
vanishing of $H^1(G,W(\sO_L))$ is implied by that of $H^1(H,W(\sO_L))$. Thus without loss of generality, we may replace $K$ by $L^t$ and assume that our extension $L/K$ is totally wildly ramified Galois extension. Thus $G$ is a $p$-group. Since any $p$-group has a normal subgroup of index $p$, again by induction and inflation-restriction exact sequence, we reduce ourselves to the case when $L/K$ is of degree $p$. But in this case the vanishing of $H^1(G,W(\sO_L))$ is guaranteed by $(ii)$. This proves the lemma.  
\end{proof}

Let $G$ be any finite cyclic group with a generator $\sigma$. Let $M$ a $G$-module. Then the cohomology group $H^i(G,M)$ is isomorphic to the $i^{th}$  cohomology group of the complex $$ M \stackrel{1-\sigma}{\longrightarrow} M \stackrel{tr}{\longrightarrow}  M \stackrel{1-\sigma}{\longrightarrow} M \stackrel{tr}{\longrightarrow}  M \to \cdots$$
\noindent where for $a \in  M$, $tr(a)=\sum_{g\in G}ga$. Thus in the case at hand, where $L/K$ is a cyclic Galois extension, we have a canonical isomorphism $$ H^1(G, W_m(\sO_L)) \cong  W_m(\sO_L)^{tr=0}/(\sigma-1) W_m(\sO_L) $$ \\
\noindent Henceforth, for $K$ as before, we assume $L/K$ is a totally ramified cyclic extension of degree $p$. For such an extension we will denote by $s$ the ramification break. To prove the theorem \ref{reduction} we need following lemmas and results from \cite{lh}. 

\begin{lemma} {\rm(\cite{lh}, 2.4)} \label{imp} Let ${\rm L/K}$ be as above. Suppose that $x \in \sO_L^{tr=0}$ represents a non-zero class in ${\rm H^1(G,\sO_L)}$. Then $v_L(x_1) \leq s-1$.
\end{lemma}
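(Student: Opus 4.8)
Since $G=\langle\sigma\rangle$ is cyclic of order $p$ and, as recalled just above, $H^1(G,\sO_L)\cong \sO_L^{tr=0}/(\sigma-1)\sO_L$, the assertion is equivalent to its contrapositive: every $x\in\sO_L^{tr=0}$ with $v_L(x)\ge s$ is a coboundary, i.e. lies in $(\sigma-1)\sO_L$. My plan is to prove the slightly stronger inclusion $\sO_L^{tr=0}\cap\mathfrak m_L^{\,s}\subseteq(\sigma-1)\sO_L$ by a successive-approximation argument along the $\mathfrak m_L$-adic filtration: I would cancel the leading term of $x$ one valuation at a time by a suitable $(\sigma-1)y$, and sum the corrections using completeness of $\sO_L$.

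The cancellation is driven by two ramification computations describing how $\sigma-1$ and $tr$ act on the graded pieces $\mathrm{gr}^m=\mathfrak m_L^{\,m}/\mathfrak m_L^{\,m+1}\cong k_L$. First, from the definition of the break one has $v_L(\sigma\pi-\pi)=s+1$ for a uniformizer $\pi$, so writing $\sigma\pi=\pi(1+a)$ with $v_L(a)=s$ gives $v_L\!\big((\sigma-1)\pi^{m}\big)=m+s$ whenever $p\nmid m$; hence $\sigma-1$ induces an isomorphism $\mathrm{gr}^{m}\xrightarrow{\sim}\mathrm{gr}^{m+s}$ exactly when $p\nmid m$, and therefore $(\sigma-1)\sO_L$ can realize any prescribed leading term in a valuation $v\ge s+1$ with $v\not\equiv s\pmod p$ (take $y=c\,\pi^{\,v-s}$). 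Second, since $G_i=G$ for $0\le i\le s$, the different satisfies $v_L(\mathfrak d_{L/K})=(p-1)(s+1)=:d$, and from $\mathfrak d_{L/K}^{-1}=\{y:tr(y\sO_L)\subseteq\sO_K\}$ one gets $tr(\mathfrak m_L^{\,m})=\mathfrak m_K^{\lfloor (m+d)/p\rfloor}$; a short congruence computation (the floor jumps precisely when $m\equiv p-1-d\equiv s\pmod p$) shows that the induced graded trace $\mathrm{gr}^m_L\to\mathrm{gr}_K$ is nonzero exactly when $m\equiv s\pmod p$. Consequently a nonzero element of $\sO_L^{tr=0}$ cannot have valuation $\equiv s\pmod p$, for otherwise its leading term would force $tr(x)\ne0$.

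With these facts the induction closes cleanly. Given $x\in\sO_L^{tr=0}$ with $v_L(x)\ge s$, I may assume $x\ne0$; then the trace computation gives $v_L(x)\not\equiv s\pmod p$, hence $v_L(x)\ge s+1$ with $v_L(x)\not\equiv s\pmod p$, which is exactly the range in which $\sigma-1$ can match a leading term. Choosing $y_0$ so that $(\sigma-1)y_0$ agrees with $x$ to leading order, the difference $x-(\sigma-1)y_0$ again lies in $\sO_L^{tr=0}$ (because $(\sigma-1)\sO_L\subseteq\sO_L^{tr=0}$, as $tr$ is $\sigma$-invariant) and has strictly larger valuation, still $\ge s$. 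Iterating yields $y_0,y_1,\dots$ with $v_L(y_n)\to\infty$, so $y=\sum_n y_n$ converges by completeness and $x=(\sigma-1)y$, proving $x$ is a coboundary.

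I expect the main obstacle to be the alignment of the two congruence conditions in the second paragraph: the trace forbids nonzero trace-zero classes from valuations $\equiv s\pmod p$, while $\sigma-1$ can cancel leading terms precisely at valuations $\not\equiv s\pmod p$. That these forbidden and allowed residue classes coincide is exactly what makes every trace-zero element of valuation $\ge s$ reachable, and verifying it reduces to the two computations $v_L(\sigma\pi-\pi)=s+1$ and $v_L(\mathfrak d_{L/K})=(p-1)(s+1)$, so the real care lies in the modular bookkeeping rather than in any hard estimate. A secondary point to confirm is that leading-term cancellation never drops the valuation below $s$ and that the correction series converges, both of which follow from the valuation strictly increasing at each step together with completeness of $\sO_L$.
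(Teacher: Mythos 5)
Your proof is correct, but note that there is nothing in the paper to compare it against line by line: the paper does not prove this lemma at all --- it is imported verbatim from Hesselholt (\cite{lh}, Lemma 2.4) and used as a black box in the proof of Theorem \ref{reduction}. (Also, $v_L(x_1)$ in the statement just means $v_L(x)$, the element regarded as a Witt vector of length one, which is how you read it.) Your successive-approximation argument is the standard ramification-theoretic proof, and the key computations check out: $v_L(\sigma\pi-\pi)=s+1$ since $G_s=G$, $G_{s+1}=1$, so for $c\in\sO_K$ and $p\nmid m$ the element $(\sigma-1)(c\pi^m)=c\bigl((\sigma-1)\pi^m\bigr)$ has exact valuation $m+s$ with leading residue $m\bar c\bar\alpha$, and since $k_L=k_K$ (total ramification) every leading term at a valuation $v\geq s+1$ with $v\not\equiv s \pmod p$ is realized; on the trace side, $v_L(\mathfrak d_{L/K})=\sum_{i\geq 0}(|G_i|-1)=(p-1)(s+1)=d$, the duality $\mathfrak d_{L/K}^{-1}=\{y: tr(y\sO_L)\subseteq\sO_K\}$ gives $tr(\mathfrak m_L^m)=\mathfrak m_K^{\lfloor (m+d)/p\rfloor}$, and since $d\equiv-(s+1)\pmod p$ the floor jumps exactly at $m\equiv -1-d\equiv s\pmod p$, as you say. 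One step you should make explicit: from ``the graded trace is nonzero'' you conclude that \emph{every} element of exact valuation $m\equiv s\pmod p$ has nonzero trace; that requires injectivity of the graded map, not mere nonvanishing, and it holds here because $\mathfrak m_L^m/\mathfrak m_L^{m+1}$ is one-dimensional over $k_L=k_K$ and the graded trace is $k_K$-linear --- again invoking total ramification. With that filled in, the two congruence classes mesh exactly as you describe, each correction preserves $tr=0$ (as $tr\circ(\sigma-1)=0$) and strictly raises the valuation, completeness sums the corrections, and you get $\sO_L^{tr=0}\cap\mathfrak m_L^{s}\subseteq(\sigma-1)\sO_L$, which is precisely the contrapositive of the lemma. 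What your route buys is a self-contained treatment that frees this paper from its one substantive dependence on \cite{lh} at this point (the authors, by contrast, buy brevity by citing); it also makes visible why the bound $s-1$ is sharp, since valuations $\equiv s\pmod p$ are exactly where the trace obstructs and where $\sigma-1$ cannot cancel.
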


\begin{lemma}{\rm (\cite{lh}, 2.1) } \label{vktr} Let ${\rm L/K}$ be as above. For all $a \in \sO_L$, $$v_K(tr(a)) \geq (v_L(a)+s(p-1))/p$$
\end{lemma}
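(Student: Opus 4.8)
The plan is to turn the statement into a purely ramification-theoretic estimate and then read it off the different of $L/K$. Since $tr(a)\in K$ and $L/K$ is totally ramified of degree $p$, the two normalised valuations satisfy $v_K=\frac1p v_L$ on $K$; as $tr(\sO_L)\subseteq\sO_K$, the assertion $v_K(tr(a))\ge (v_L(a)+s(p-1))/p$ is equivalent to
$$ v_L(tr(a))\ \ge\ v_L(a)+s(p-1)\qquad(a\in\sO_L). $$
Both sides shift by $p$ under multiplication by $\pi_K$, so I may work with an arbitrary $a\in L^{\times}$.

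First I would compute the different. As $L/K$ is totally ramified we may write $\sO_L=\sO_K[\pi]$ for a uniformizer $\pi$ whose minimal polynomial $f$ is Eisenstein, and then $\mathfrak d_{L/K}=(f'(\pi))$. Writing $f(X)=\prod_{\tau\in G}(X-\tau\pi)$ gives $f'(\pi)=\prod_{\tau\ne 1}(\pi-\tau\pi)$. Because $s$ is the (unique) ramification break of the $\Z/p$-extension $L/K$, every non-identity $\tau$ satisfies $v_L(\tau\pi-\pi)=s+1$; hence $v_L(\mathfrak d_{L/K})=(p-1)(s+1)$ and the codifferent is $\mathfrak d_{L/K}^{-1}=\pi^{-(p-1)(s+1)}\sO_L$. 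I would then invoke the defining property $tr(x\,\sO_L)\subseteq\sO_K$ for every $x\in\mathfrak d_{L/K}^{-1}$. Given $a$ with $v_L(a)=m$, set $c=\lfloor (m+(p-1)(s+1))/p\rfloor$; then $\pi_K^{-c}a\in\pi^{-(p-1)(s+1)}\sO_L=\mathfrak d_{L/K}^{-1}$, so $\pi_K^{-c}\,tr(a)=tr(\pi_K^{-c}a)\in\sO_K$, i.e. $v_K(tr(a))\ge c$. The elementary identity $\lfloor (N+p-1)/p\rfloor=\lceil N/p\rceil$ with $N=m+s(p-1)$ then gives $v_K(tr(a))\ge\lceil (m+s(p-1))/p\rceil\ge (v_L(a)+s(p-1))/p$, which is the claim (indeed with a ceiling, slightly sharper).

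The only nontrivial input is the evaluation of the different, that is the identity $v_L(\tau\pi-\pi)=s+1$ for all $\tau\ne 1$. This is exactly the statement that $s$ is the break of the filtration $G_i=\{\tau:v_L(\tau\pi-\pi)\ge i+1\}$ for a $\Z/p$-extension, whose only subgroups are $1$ and $G$; I would single this out as the main point, since everything after it is bookkeeping with valuations and one floor/ceiling manipulation.

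Alternatively, and more in the spirit of Section~3, one can argue directly. Writing $\sigma=1+\delta$ and using the hockey-stick identity yields $tr=\sum_{k=0}^{p-1}\binom{p}{k+1}\delta^{k}$, whose top term is $\delta^{p-1}$ while every lower coefficient $\binom{p}{k+1}$ with $0\le k\le p-2$ is divisible by $p$. Combined with the estimate $v_L(\delta a)=v_L((\sigma-1)a)\ge v_L(a)+s$ (checked termwise on the integral basis $1,\pi,\dots,\pi^{p-1}$), this gives $v_L(tr(a))\ge v_L(a)+\min\bigl((p-1)s,\ p\,e_K\bigr)$. This route reaches the conclusion once one invokes the standard bound $s\le p\,e_K/(p-1)$ on the break, which is precisely where it becomes slightly less self-contained than the different computation above.
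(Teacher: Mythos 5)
Your proof is correct, and it is essentially the same argument as the one behind the cited result: this paper does not prove the lemma at all (it imports it as Lemma~2.1 of Hesselholt's paper), and Hesselholt's proof there is precisely your computation --- the single break $s$ gives $v_L(\tau\pi-\pi)=s+1$ for all $\tau\neq 1$, hence $v_L(\mathfrak d_{L/K})=(s+1)(p-1)$, and the codifferent/trace duality (Serre, \emph{Local Fields}, V~\S 3) yields $v_K(tr(a))\geq\lceil (v_L(a)+s(p-1))/p\rceil$, your slightly sharper ceiling form. Your alternative sketch via $tr=\sum_{k=0}^{p-1}\binom{p}{k+1}(\sigma-1)^k$ is also sound, but as you note it needs the extra input $s\leq pe_K/(p-1)$, so the different computation is the cleaner route.
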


\begin{lemma} {\rm (\cite{lh}, 2.2.)}\label{vksub} Let ${\rm L/K}$ be as above. For all $ a \in \sO_L $, $$v_K(tr(a^p)-tr(a)^p) = e_K + v_L(a)$$
\end{lemma}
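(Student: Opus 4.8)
The plan is to expand everything in terms of the Galois conjugates of $a$ and to isolate the leading term in the $\pi_L$-adic filtration. Let $\sigma$ generate $G$ and write $b_i = \sigma^i(a)$ for $0 \le i \le p-1$, so that $tr(a) = \sum_i b_i$ and $tr(a^p) = \sum_i b_i^p$. Expanding $tr(a)^p$ by the multinomial theorem, the pure terms $b_i^p$ are exactly $tr(a^p)$, leaving
$$ tr(a)^p - tr(a^p) = \sum_{\substack{k_0+\cdots+k_{p-1}=p\\ \text{no }k_i=p}} \binom{p}{k_0,\ldots,k_{p-1}} \prod_i b_i^{k_i}. $$
In every surviving term each $k_i \le p-1$, so $v_p\!\left(\binom{p}{k_0,\ldots,k_{p-1}}\right) = v_p(p!) = 1$; hence each coefficient equals $p\,u_k$ with $u_k$ prime to $p$, and $tr(a)^p - tr(a^p) = p\,S$ with $S = \sum_k u_k \prod_i b_i^{k_i}$. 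Since traces are $G$-invariant, $S \in K$, and because $v_K$ of the target quantity differs from $v_K(tr(a)^p - tr(a^p))$ only by a sign, it suffices to show $v_K(p\,S) = e_K + v_L(a)$, i.e. $v_K(S) = v_L(a)$.

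Next I would reduce to a non-cancellation statement. Put $m = v_L(a)$. Because $\sigma$ preserves $v_L$, each conjugate satisfies $v_L(b_i) = m$, so every monomial $\prod_i b_i^{k_i}$ (of total degree $p$) has $v_L$ equal to $pm$; consequently $v_L(S) \ge pm$, and since $S \in K$ and $e_{L/K}=p$ this gives $v_K(S) = v_L(S)/p \ge m$. Everything therefore comes down to the reverse inequality, namely that the leading monomials do not all cancel, i.e. $v_L(S) = pm$ exactly. This is the crux of the argument.

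To settle it I would divide by $\pi_L^{pm}$ and reduce modulo the maximal ideal to $k_L = k_K$. The key claim is that all normalized conjugates share one residue: $\overline{b_i/\pi_L^{m}} = \bar c$ for every $i$, where $\bar c = \overline{a/\pi_L^{m}} \in k_K^{\times}$. This rests on two facts special to a totally ramified degree-$p$ extension: $\sigma$ acts trivially on the residue field, so $\overline{\sigma^i(u)} = \bar u$ for any unit $u$; and $\sigma^i(\pi_L)/\pi_L \equiv 1$ modulo the maximal ideal, since $v_L(\sigma^i(\pi_L) - \pi_L) = s+1 \ge 2$ for every nontrivial $\sigma^i$ (a single ramification break $s \ge 1$). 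Granting the claim,
$$ \overline{S/\pi_L^{pm}} = \bar c^{\,p}\sum_k \bar u_k. $$
Finally I would evaluate $\sum_k u_k \bmod p$: summing all multinomial coefficients of $p$ into $p$ parts gives $p^p$, and the $p$ pure terms account for $p$, so $\sum_k \binom{p}{k_0,\ldots,k_{p-1}} = p^p - p$ over the mixed indices and $\sum_k u_k = p^{p-1} - 1 \equiv -1 \pmod p$. Thus $\overline{S/\pi_L^{pm}} = -\bar c^{\,p} \ne 0$, whence $v_L(S) = pm$ and $v_K(S) = m = v_L(a)$, as required.

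I expect the genuine obstacle to be the residue computation in the last paragraph, namely showing that after normalizing by $\pi_L^{m}$ all conjugates of $a$ reduce to the same element of $k_K$. This is exactly where total ramification and the ramification break $s$ enter, and it is what forbids the leading terms from cancelling; the combinatorial identity $\sum_k u_k \equiv -1 \pmod p$ is then an elementary but essential check ensuring the surviving residue is nonzero.
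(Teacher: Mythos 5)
Your proof is correct, but note first that this paper never proves the lemma at all: it is imported from Hesselholt (\cite{lh}, Lemma 2.2), so the relevant comparison is with his argument rather than with anything internal to this text. Both proofs begin identically, expanding $tr(a)^p$ by the multinomial theorem and writing $tr(a)^p - tr(a^p) = pS$ with $S \in \sO_K$, using that every mixed coefficient $\binom{p}{k_0,\ldots,k_{p-1}}$ with all $k_i \leq p-1$ has $p$-adic valuation exactly $1$; the divergence is at the non-cancellation step $v_K(S) = v_L(a)$. Hesselholt groups the mixed monomials into orbits under the cyclic $G$-action: the unique fixed monomial $\prod_i \sigma^i(a)$ contributes $(p-1)!\,N_{L/K}(a)$, whose valuation is exactly $v_L(a)$ since $(p-1)!$ is a unit by Wilson's theorem and $v_K(N_{L/K}(a)) = v_L(a)$ for a totally ramified extension, while each free orbit contributes a trace of an element of $v_L \geq p\,v_L(a)$, which is strictly subdominant by the trace estimate of Lemma \ref{vktr}. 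You instead normalize by $\pi_L^{p\,v_L(a)}$ and compute the residue of $S$ wholesale: your two supporting facts are sound ($G$ acts trivially on $k_L$ because $G = G_0$, and $\sigma(\pi_L)/\pi_L \equiv 1 \bmod \mathfrak{m}_L$ because the extension is wildly ramified, so $G = G_1$ and $s \geq 1$), and your count $\sum_k u_k = (p^p - p)/p = p^{p-1} - 1 \equiv -1 \pmod p$ is Wilson's theorem in disguise, since $u_k$ is constant on the free orbits of size $p$ and the fixed tuple contributes $(p-1)!$. What your route buys is independence from Lemma \ref{vktr}: you use the ramification break only through the qualitative fact $s \geq 1$, whereas Hesselholt needs the quantitative bound $v_K(tr(b)) \geq (v_L(b)+s(p-1))/p$ to suppress the free-orbit terms; the remaining bookkeeping in your write-up ($\bar{c} \neq 0$ since $v_L(a)$ is exact, $v_L = p\,v_K$ on $K$, and the sign being irrelevant to valuations) all checks out, so your argument is complete and would make the paper self-contained on this point.
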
 

\begin{lemma} \label{C} Let $\underline{x} = (x_1,x_2,\cdots,x_n) \in {W_n(\sO_L)}^{tr=0}$ then  for all $1 \leq \ell \leq n$ $$ - tr(x_\ell) = \frac{tr({x_{\ell-1}^p}) - tr(x_{\ell-1})^p}{p} - C.tr(x_{\ell-1})^p + h_{\ell-2}$$ where $C$ is the integer defined by 
$$ C=\frac{1}{p}\sum_{j=1}^{p-1}(-1)^j\binom{p}{j}$$ and $h_{\ell-2}$ is a polynomial expression in $(x_1,\cdots,x_{\ell-2})$ and it's all ${p-1}$ conjugates. Further each monomial of $h_{\ell-2}$ is of degree $\geq p^2$.
\end{lemma}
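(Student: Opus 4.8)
The plan is to apply Lemma \ref{imp2} to the $p$ Galois conjugates of $\underline{x}$. Since $G=\langle\sigma\rangle$ is cyclic of order $p$, set $\underline{x}_i:=\sigma^{i-1}\underline{x}$ for $1\le i\le p$, so that the coordinate in position $j$ is $x_{ij}=\sigma^{i-1}(x_j)$. Then $\sum_{i=1}^p\underline{x}_i=\sum_{i=0}^{p-1}\sigma^i\underline{x}=tr(\underline{x})$, the trace computed in the Witt ring, and this vanishes because $\underline{x}\in W_n(\sO_L)^{tr=0}$. Writing $(z_1,\dots,z_n):=\sum_i\underline{x}_i$, we thus have $z_\ell=0$ for every $\ell$, which is the input that will turn the formal identities of Lemma \ref{imp2} into relations among traces.

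First I would feed this into part (1) of Lemma \ref{imp2}. As $\sigma$ is a ring automorphism, $\sum_{i=1}^p x_{i\ell}=\sum_{i=0}^{p-1}\sigma^i(x_\ell)=tr(x_\ell)$, so the relation $z_\ell=\sum_i x_{i\ell}+f_\ell$ together with $z_\ell=0$ gives the key identity $f_\ell=-tr(x_\ell)$ for all $\ell$, and in particular $f_{\ell-1}=-tr(x_{\ell-1})$. Proving the lemma then reduces to substituting $x_{ij}=\sigma^{i-1}(x_j)$ into the expression for $f_\ell$ furnished by part (2) and rewriting each piece as a trace. Because $\sigma$ is multiplicative, $\sum_i x_{i,\ell-1}^p=tr(x_{\ell-1}^p)$ and $\sum_i x_{i,\ell-1}=tr(x_{\ell-1})$, so the leading summand $\tfrac1p\big(\sum_i x_{i,\ell-1}^p-(\sum_i x_{i,\ell-1})^p\big)$ becomes $\tfrac1p\big(tr(x_{\ell-1}^p)-tr(x_{\ell-1})^p\big)$, matching the first term of the claim.

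The heart of the computation is the binomial middle term. To get it in a usable shape I would go back one step and expand $z_{\ell-1}^p=\big(\sum_i x_{i,\ell-1}+f_{\ell-1}\big)^p$ directly, as in the derivation of Lemma \ref{imp2}(2); this produces the term in the ``power of the sum'' grouping $-\tfrac1p\sum_{j=1}^{p-1}\binom{p}{j}\big(\sum_i x_{i,\ell-1}\big)^{p-j}f_{\ell-1}^{\,j}$. Substituting $\sum_i x_{i,\ell-1}=tr(x_{\ell-1})$ and $f_{\ell-1}=-tr(x_{\ell-1})$ makes each factor a power of $tr(x_{\ell-1})$, so that $\big(tr(x_{\ell-1})\big)^{p-j}\big(-tr(x_{\ell-1})\big)^{j}=(-1)^j tr(x_{\ell-1})^p$ and the whole sum collapses to $-C\,tr(x_{\ell-1})^p$ with $C=\tfrac1p\sum_{j=1}^{p-1}(-1)^j\binom{p}{j}$; that $C\in\Z$ is immediate from $p\mid\binom{p}{j}$ for $1\le j\le p-1$. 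Finally the residual polynomial $h_{\ell-2}$ of Lemma \ref{imp2}(2), once evaluated at $x_{ij}=\sigma^{i-1}(x_j)$, is precisely a polynomial in $x_1,\dots,x_{\ell-2}$ and their conjugates, and it keeps the degree bound $\ge p^2$ from that lemma.

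The step I expect to be the main obstacle is the bookkeeping of this middle term. One must be careful to keep the ``power of the sum'' grouping (so the substituted factors literally become powers of $tr(x_{\ell-1})$), to retain the $j=1$ contribution so the coefficient is exactly the alternating sum defining $C$, and crucially to use the relation $f_{\ell-1}=-tr(x_{\ell-1})$ only inside this binomial term while leaving $h_{\ell-2}$ as a formal polynomial of degree $\ge p^2$ in the lower conjugates. Keeping these two evaluations separate, rather than also simplifying $h_{\ell-2}$ through $z_{\ell-1}=0$, is exactly what yields the asserted form, in which $tr(x_{\ell-1}^p)-tr(x_{\ell-1})^p$ and $tr(x_{\ell-1})^p$ appear isolated, as will be needed for the valuation estimates of Lemmas \ref{vktr}--\ref{vksub}.
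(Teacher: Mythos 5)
Your proposal is correct and takes essentially the same route as the paper: the paper's proof of Lemma \ref{C} consists precisely of substituting $x_{ij}=\sigma^{i-1}x_j$ and $z_i=0$ into Lemma \ref{imp2}(2), leaving the remaining bookkeeping implicit. Your additional details --- deriving $f_\ell=-tr(x_\ell)$ from Lemma \ref{imp2}(1) and collapsing the binomial middle term, with the power-of-the-sum grouping and the $j=1$ term retained, to $-C\,tr(x_{\ell-1})^p$ --- simply make explicit what the paper's two-line argument takes for granted.
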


\begin{proof} Since $\underline{x}\in W_n(\sO_L)^{tr=0}$ we have 
$$\sum_{i=1}^p (\sigma^{i-1}x_1,...,\sigma^{i-1}x_n) = (0,...0)$$
Thus the above claim follows directly from Lemma \ref{imp2}(2) by making the substitutions 
$$ x_{ij} = \sigma^{i-1}x_j \ \ \ 1 \leq i \leq p, \ \ 1\leq j\leq n,$$
and 
$$ z_i = 0 \ \ \forall \ 1 \leq i \leq n$$
\end{proof}

\begin{lemma} \label{invariant} For $\ell \geq 2$, $h_{\ell-2} \in \sO_K$. Further 
$$ v_K(h_{\ell-2}) \geq p\cdot {\rm min}\{v_L(x_i) \ | \ 1\leq i \leq \ell-2\}$$
\end{lemma}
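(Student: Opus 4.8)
The plan is to read $h_{\ell-2}$ off the identity in Lemma~\ref{C} and to control it by combining the trace estimate of Lemma~\ref{vksub} with the degree bound on its monomials recorded in Lemma~\ref{C} (equivalently Lemma~\ref{imp2}(2)). Throughout write $m = {\rm min}\{v_L(x_i) : 1\le i\le \ell-2\}$; since each $x_i\in\sO_L$ we have $m\ge 0$, a point we will use when converting a degree bound into a valuation bound. Recall also that $L/K$ is totally ramified of degree $p$, so $v_L$ restricts to $p\,v_K$ on $K$, that is $v_K(a)=v_L(a)/p$ for $a\in K$, and that $\sO_L^G=\sO_K$.

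First I would prove $h_{\ell-2}\in\sO_K$. Solving the identity of Lemma~\ref{C} for $h_{\ell-2}$ gives
$$ h_{\ell-2} = -\,tr(x_\ell) - \frac{tr(x_{\ell-1}^p) - tr(x_{\ell-1})^p}{p} + C\cdot tr(x_{\ell-1})^p. $$
Each summand on the right lies in $\sO_K$: the terms $tr(x_\ell)$ and $tr(x_{\ell-1})^p$ are traces of integral elements, hence lie in $\sO_L^G=\sO_K$, and $C\in\Z$; for the middle term, Lemma~\ref{vksub} gives $v_K\bigl(tr(x_{\ell-1}^p)-tr(x_{\ell-1})^p\bigr) = e_K + v_L(x_{\ell-1}) \ge e_K = v_K(p)$, so that quotient is again integral and $G$-invariant. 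Hence $h_{\ell-2}\in\sO_K$. (Alternatively one may note that $h_{\ell-2}$, built from the symmetric Witt-addition polynomials, is invariant under the cyclic permutation of the conjugates $\sigma^{i-1}x_j$ induced by $\sigma$, hence $G$-invariant, and is integral by Lemma~\ref{imp2}.)

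It then remains to bound $v_K(h_{\ell-2})$. Here I would use that $h_{\ell-2}$ is a $\Z$-polynomial in the variables $\{\sigma^{i-1}x_j : 1\le i\le p,\ 1\le j\le \ell-2\}$, every monomial of which has degree $\ge p^2$. Each such variable satisfies $v_L(\sigma^{i-1}x_j)=v_L(x_j)\ge m$ because $\sigma$ preserves $v_L$, and the integer coefficients have nonnegative $v_L$; since $m\ge 0$, a monomial of degree $d\ge p^2$ has valuation $\ge d\,m\ge p^2 m$. Taking the minimum over monomials yields $v_L(h_{\ell-2})\ge p^2 m$, and as $h_{\ell-2}\in\sO_K\subseteq K$ we conclude $v_K(h_{\ell-2})=v_L(h_{\ell-2})/p\ge p\,m$, as required.

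The one genuinely substantive point is the integrality statement $h_{\ell-2}\in\sO_K$: a naive inspection of the defining expression shows only that $h_{\ell-2}\in K$ after dividing by various powers of $p$, and it is precisely the cancellation measured by Lemma~\ref{vksub}, namely the divisibility of $tr(a^p)-tr(a)^p$ by $p$, that forces integrality. Once integrality and $G$-invariance are secured, the valuation estimate is a direct degree count, the only care needed being the observation that $m\ge 0$ (valid since the $x_i$ are integral), so that the total degree $\ge p^2$ of each monomial translates into the lower bound $v_L(h_{\ell-2})\ge p^2m$ rather than an upper bound.
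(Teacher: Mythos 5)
Your proposal is correct, and its second half coincides with the paper's proof: you bound $v_L(h_{\ell-2})$ below by $p^2\,m$ using that every monomial has degree $\geq p^2$ in the conjugates $\sigma^{i-1}x_j$, each of valuation $\geq m$, and then convert via $v_L|_K = p\,v_K$ (total ramification of degree $p$); your explicit remark that $m\geq 0$ is what makes the degree count a \emph{lower} bound is a point the paper leaves tacit but is indeed needed. Where you genuinely diverge is the integrality step, and there your emphasis is misplaced relative to the paper's architecture. The paper's proof is the one you relegate to a parenthesis: by Lemma \ref{imp2}(2), $h_{\ell-2}$ is a polynomial with \emph{integer} coefficients in the $x_{ij}$ --- the divisibility you describe as the ``genuinely substantive point'' was already absorbed at the level of universal polynomials, via Theorem \ref{Witt} together with $p \mid \binom{p}{j}$ for $1\leq j\leq p-1$ --- so the evaluated element lies in $\sO_L$, and Galois invariance (the expression is symmetric under the cyclic permutation of the rows $\sigma^{i-1}x_j$) puts it in $\sO_L^{G}=\sO_K$ with no field-theoretic input whatsoever. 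Your primary route instead solves the identity of Lemma \ref{C} for $h_{\ell-2}$ and checks each summand separately, invoking Lemma \ref{vksub} to see that $\bigl(tr(x_{\ell-1}^p)-tr(x_{\ell-1})^p\bigr)/p$ is integral; this is valid, but note it only works in the trace-zero context of Lemma \ref{C} (where $tr(x_\ell)$ is available from the same identity) and it imports the ramification-theoretic Lemma \ref{vksub} where the paper needs nothing beyond formal properties of Witt addition. What your route buys is independence from the coefficient bookkeeping of Lemma \ref{imp2}(2); what the paper's route buys is that integrality of $h_{\ell-2}$ is purely formal and universal, which is the cleaner statement --- though since the lemma is only ever applied to trace-zero vectors, your argument does suffice for everything that follows.
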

\begin{proof} The polynomial expression for $h_{\ell-2}$ in $x_1,...,x_n$ and its conjugates can be seen to be invariant under the Galois action. Hence it belongs to $\sO_K$. Further since $h_{\ell-2}$ is a sum of monomials in $x_i's, i\leq \ell-2$ and its conjugates, each of degree $\geq p^2$, we have 
$$ v_L(h_{\ell-2}) \geq p^2\cdot  {\rm min}\{v_L(x_i) \ | \ 1\leq i \leq \ell-2\}$$
The lemma now follows from the fact that $v_L(h_{\ell-2})=p\cdot v_K(h_{\ell-2})$.
\end{proof}

\begin{proof}[Proof of Theorem \ref{reduction}]By the Lemma \ref{imp}, to prove the Theorem \ref{reduction} it is sufficient to find $M \in \N$ such that, for all $\underline{x} =(x_1,...,x_M) \in  W_M(\sO_L)^{tr=0}$, $v_L(x_1) \geq s$. \\

\noindent \underline{Step(1)}: Let $n$ be a positive integer and $(x_1,...,x_n)\in W_n(\sO_L)^{tr=0}$. We will prove by induction on $\ell$ that $v_L(x_{\ell}) \geq \frac {s(p-1)}{p}$ for $ 1 \leq \ell \leq n-1$. \\

\noindent By Lemma \ref{C}, and using the fact that $h_0=0$, $tr(x_1)=0$ we have 
$$ -tr(x_2) = \frac{1}{p}(tr(x_1^p)-tr(x_1)^p)$$
But by Lemma \ref{vktr}, $v_K(tr(x_2)) \geq \frac{s(p-1)}{p}$. Thus
$$    v_K(tr(x_1^p)-tr(x_1)^p)-e_K = v_K(tr(x_2))\geq \frac{s(p-1)}{p}$$
By Lemma \ref{vksub} $ v_K(tr(x_1^p)-tr(x_1)^p)=v_L(x_1)+e_K$. Therefore $v_L(x_1)\geq \frac{s(p-1)}{p}$. This proves the claim for $\ell =1$. \\

\noindent Now assume that for all $i\leq \ell-1$, $v_L(x_i)\geq \frac{s(p-1)}{p}$. We will prove $v_L(x_{\ell})\geq \frac{s(p-1)}{p}$. By Lemma \ref{C}, we have 
$$  - tr(x_{\ell+1}) = \frac{tr(x_\ell^p) - tr(x_\ell)^p}{p} - C\cdot tr(x_\ell)^p + h_{\ell-1} $$
Thus, using Lemma \ref{vksub} we get 
$$ v_L(x_{\ell}) = v_K(\frac{tr(x_\ell^p) - tr(x_\ell)^p}{p}) \geq {\rm inf}\{v_K(tr(x_{\ell+1})),v_K(C\cdot tr(x_{\ell})^p),v_K(h_{\ell-1})\}$$
Using Lemma \ref{vktr}, we have $$v_K(tr(x_{\ell+1}))\geq s(p-1)/p \ \ \ \text{and}$$ $$v_K(C\cdot tr(x_{\ell})^p)\geq s(p-1)$$ By Lemma \ref{invariant}, and by induction hypothesis $v_K(h_{\ell-1})\geq s(p-1)$. Combining the above, we get 
$$v_L(x_{\ell})\geq \frac{s(p-1)}{p}$$

\noindent \underline{Step(2)}: Now we will show existance of $M \in \N$ such that for all $\underline{x} \in W_M(\sO_L)$, $v_L(x_1) \geq s$. For any positive integer $n$ and $(x_1,...,x_n)\in W_n(\sO_L)^{tr=0}$, by Step(1) we have 
$$v_L(x_i) \geq \frac{s(p-1)}{p}, \ \ \ \forall \   1 \leq i \leq {n-1}.$$

\noindent For a fixed $n$, and $1\leq i\leq n-1$, we claim that 
$$v_L(x_{n-i}) \geq \frac{s(p-1)}{p}(1+\frac{1}{p}+\cdots +\frac{1}{p^{i-1}})$$
We prove this by induction on $i$. For $i=1$, this is the claim that 
$$v_L(x_{n-1}) \geq \frac{s(p-1)}{p}$$ which follows from Step(1). Now assuming the claim for a general $1 \leq i \leq n-2$, we will prove it for $i+1$.  By induction hypothesis $$v_L(x_{n-i})\geq \frac{s(p-1)}{p}(1+\frac{1}{p}+ \cdots + \frac{1}{p^{i-1}})$$
Therefore by using Lemma \ref{vktr} we get 
$$v_K(tr(x_{n-i})) \geq \frac{v_L(x_{n-i})+s(p-1)}{p} \geq \frac{s(p-1)}{p}(1+\frac{1}{p}+ \cdots + \frac{1}{p^{i}})$$

By Lemma \ref{C}
$$ -tr(x_{n-i}) = \frac{tr(x_{n-(i+1)}^p) - tr(x_{n-(i+1)})^p}{p} - C\cdot tr(x_{n-(i+1)})^p + h_{n-(i+2)}$$
By Lemma \ref{vktr}, $v_K(C\cdot tr(x_{n-(i+1)})^p)\geq s(p-1)$. By Step(1) and Lemma \ref{invariant}, $v_K(h_{n-(i+2)})\geq s(p-1)$. Thus, using Lemma \ref{vksub},\\

$
\begin{array}{ll}
 v_L(x_{n-(i+1)}) &= v_K\left(\frac{tr(x_{n-(i+1)}^p) - tr(x_{n-(i+1)})^p}{p}\right) \\
                &\geq {\rm min}\{ v_K(tr(x_{n-i})),v_K(C\cdot tr(x_{n-(i+1)})^p),v_K(h_{n-(i+2)}) \} \\
                & \geq {\rm min}\{ \frac{s(p-1)}{p}(1+\frac{1}{p}+ \cdots + \frac{1}{p^{i}}),s(p-1),s(p-1)\}\\
                &= \frac{s(p-1)}{p}(1+\frac{1}{p}+ \cdots + \frac{1}{p^{i}})
\end{array}
$\\
\noindent This proves the claim. Hence 
$$ v_L(x_1) \geq \frac{s(p-1)}{p}(1+\frac{1}{p}+ \cdots + \frac{1}{p^{n-2}})$$

We know that as $n\to \infty$, $\frac{s(p-1)}{p}(1+\frac{1}{p}+ \cdots + \frac{1}{p^{n-2}})\to s$. There exists an integer $M$, such that 
$$ \frac{s(p-1)}{p}(1+\frac{1}{p}+ \cdots + \frac{1}{p^{M-2}}) >s-1$$ Since $v_L$ is a discrete valuation, for such $M$ and for any 
$(x_1,...,x_M)\in W_M(\sO_L)^{tr=0}$, we have shown that 
$$ v_L(x_1) \geq s$$

\end{proof}

\end{document}